\documentclass{amsart}
\usepackage{amssymb,latexsym}
\usepackage{amsfonts}
\usepackage[mathscr]{eucal}
\usepackage[all]{xy}
\usepackage{enumerate}
\usepackage[colorlinks,linkcolor=black,anchorcolor=black,citecolor=black]{hyperref}
\usepackage{fancyhdr}

%===================================================
\pagestyle{plain}

%===================================================
\newtheorem{thm}{Theorem}[section]
\newtheorem{yl}[thm]{Lemma}
\newtheorem{tl}[thm]{Corollary}
\newtheorem{mt}[thm]{Proposition}
\theoremstyle{remark}
\newtheorem{zj}{Remark}[section]
\theoremstyle{definition}
\newtheorem{dy}{Definition}[section]
%===================================================
\numberwithin{equation}{section}
\allowdisplaybreaks[4]

%===================================================

\newcommand{\im}{\sqrt{-1}}

\newcommand{\pr}[2]{\langle#1,#2\rangle}%Inner product

\begin{document}
\title{Uniqueness of constant scalar curvature Sasakian metrics }
\author{Xishen Jin}
\address{Xishen Jin\\Key Laboratory of Wu Wen-Tsun Mathematics\\ Chinese Academy of Sciences\\School of Mathematical Sciences\\
University of Science and Technology of China\\
Hefei, 230026, P.R. China\\} \email{jinxsh@mail.ustc.edu.cn}
%\author{JiaWei Liu}
%\address{Jiawei Liu\\Key Laboratory of Wu Wen-Tsun Mathematics\\ Chinese Academy of Sciences\\School of Mathematical Sciences\\
%University of Science and Technology of China\\
%Hefei, 230026, P.R. China\\} \email{liujw24@mail.ustc.edu.cn}
\author{Xi Zhang}
\address{Xi Zhang\\Key Laboratory of Wu Wen-Tsun Mathematics\\ Chinese Academy of Sciences\\School of Mathematical Sciences\\
University of Science and Technology of China\\
Hefei, 230026,P.R. China\\ } \email{mathzx@ustc.edu.cn}
\thanks{AMS Mathematics Subject Classification. 53C55,\ 32W20.}
\thanks{The authors were supported in part by NSF in
China No.11131007, 11571332 and the Hundred Talents Program of CAS}
\begin{abstract}
  In this paper, we prove that the transverse Mabuchi $K$-energy functional is convex along the weak geodesic in the space of Sasakian metrics. As an application, we obtain the uniqueness of constant scalar curvature Sasakian metrics modulo automorphisms  for the transverse holomorphic structure.
\end{abstract}
\maketitle
%\newpage
%\tableofcontents
%\newpage
\section{Introduction}
Let $(M , g)$ be a connected oriented $2m+1$-dimensional Riemannian manifold. If the cone manifold $(C(M) , \tilde{g} ) =( M\times R^{+} , r^{2}g + dr^{2})$ is K\"ahler, we say $(M ,g)$ is a Sasakian manifold. It is well known that $(M,g)$ is a Sasakian-Einstein manifold  if and only if the K\"ahler cone $(C(M), \tilde{g} )$ is a Calabi-Yau cone.  Sasakian geometry  was introduced by Sasaki \cite{19} fifty years ago, and it can be seen as an odd-dimensional counterpart of K\"ahler geometry. Since Sasakian geometry has been proved to be a rich source for the production of positive Einstein metrics \cite{boyer2005einstein, GMSW2}, and the existence  of Sasakian-Einstein metrics is of great interest in the physics of the famous  Ads/CFT duality conjecture \cite{KW, Ma, martelli2006toric, martelli2008sasaki}, there has been renewed interest in Sasakian manifolds recently.

The aim of this paper is to study the uniqueness of Sasakian metrics with constant scalar curvature on compact Sasakian manifolds. The uniqueness of Sasakian-Einstein metrics was proved by Cho, Futaki and Ono (\cite{CFO}) for the toric case, and by Nitta and Sekiya (\cite{NS}) for the general case. In \cite{guan2012regularity}, Guan and the second author  studied the geodesic equation in the space of Sasakian metrics on a compact Sasakian manifold $(M,g)$, and obtained  the weak $C^2$ regularity of such geodesic.  Then, they proved that the constant scalar curvature Sasakian metric (cscS metric) is unique in each basic K\"ahler class if the first basic Chern class is either strictly negative or zero.

In \cite{berman2014convexity}, Berman and Berndtsson proved the convexity of the Mabuchi $K$-energy along the weak geodesic in the space of K\"ahler metrics by using Chen's weak $C^2$ regularity (\cite{blocki2009gradient,blocki2012,chen2000space}). As an application, they obtained the uniqueness of  constant scalar curvature K\"ahler metric (cscK metric) modulo automorphisms  on a compact K\"ahler manifold in a fixed K\"ahler class.

 A Sasakian manifold $(M,g)$ has one dimensional foliation $\mathcal{F}_\xi$ associated to the characteristic Reeb vector field $\xi$, which admits a transverse holomorphic structure.  Another Sasakian metric $g'$ is compatible with $g$ means that they have the same Reeb vector field, the same transverse holomorphic structure and the same holomorphic structure on the cone $C(M)$, see section 2 for details. In this paper, by using the weak $C^2$ regularity in \cite{guan2012regularity} and following the argument of Berman and Berndtsson (\cite{berman2014convexity}),  we prove the uniqueness of constant scalar curvature Sasakian metrics (cscS) up to the action of the identity component of the automorphism group for the transverse holomorphic structure, which is denoted to be $G_0$ in Definition \ref{automorphism}. In fact, we obtain the following theorem.

\medskip
%
%\begin{thm}
%\label{mainthm}
%  Let $(M, g)$ be a compact Sasakian manifold,  $g_{1}$ and $g_{2}$ are two constant scalar curvature Sasakian metrics compatible with $g$. Then there exists $\iota$ in the group $G_0$, such that $d\eta_{2}=\iota^* d\eta_{1}$.
%\end{thm}

\begin{thm}
\label{mainthm}
  Let $(M, g)$ be a compact Sasakian manifold,  $(\xi,\eta_1,\Phi_1,g_{1})$ and $(\xi,\eta_2,\Phi_2,g_{2})$ are two constant scalar curvature Sasakian metrics compatible with $g$. Then there exists $\iota$ in the group $G_0$, such that $d\eta_{2}=\iota^* d\eta_{1} =d\eta_1+ d_Bd_B^c\varphi_\iota$. Furthermore, for the two Sasakian structures, we have the following relations
  \begin{equation}
  \begin{split}
    \eta_2&=\eta_1+ d_B^c\varphi_\iota,\\
    \Phi_2&=\Phi_1-\xi\otimes d_B^c\varphi_\iota \circ \Phi,\\
    g_2&=\frac{1}{2}d\eta_2\circ (Id\otimes \Phi_2) +\eta_2\otimes \eta_2.
  \end{split}
  \end{equation}
\end{thm}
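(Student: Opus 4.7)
The plan is to follow the Berman-Berndtsson scheme \cite{berman2014convexity}, adapted to transverse K\"ahler geometry by combining the weak $C^2$ regularity of geodesics in the space of Sasakian metrics from \cite{guan2012regularity} with the convexity of the transverse Mabuchi $K$-energy announced in the abstract of this paper.

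First I would reduce to a statement about basic potentials. Since the two Sasakian structures are compatible with $g$, they share $\xi$, the transverse holomorphic structure and the holomorphic structure on the cone $C(M)$; from this one extracts a basic function $\psi$ with $\eta_2 = \eta_1 + d_B^c\psi$ and therefore $d\eta_2 = d\eta_1 + d_B d_B^c\psi$, with $\psi$ lying in the space $\mathcal{H}$ of admissible Sasakian potentials relative to $\eta_1$. I then join $0$ to $\psi$ by the weak geodesic $\psi_t$, $t\in[0,1]$, in $\mathcal{H}$ constructed in \cite{guan2012regularity}. By the convexity theorem of the present paper, $\mathcal{M}(\psi_t)$ is convex in $t$. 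Both endpoints are cscS and hence critical points of $\mathcal{M}$, so $\mathcal{M}$ must be affine along the geodesic.

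The heart of the proof, and the main obstacle, is the rigidity step: extracting from the affineness of $\mathcal{M}(\psi_t)$ a one-parameter family $\iota_t \in G_0$ of automorphisms of the transverse holomorphic structure with $d\eta_{\psi_t} = \iota_t^* d\eta_1$. In the K\"ahler case Berman and Berndtsson achieve this via a positivity/rigidity analysis of the associated direct image (or Bergman kernel) bundle, coupled with a bootstrap that produces a holomorphic vector field generating the geodesic. The transverse version must produce a basic transverse holomorphic vector field invariant along the Reeb foliation $\FF_\xi$ whose time-one flow lies in $G_0$; the principal new difficulty is maintaining the basic and foliated nature of all the objects involved throughout this rigidity argument, so that the resulting symmetry preserves the Sasakian structure as a whole and not merely its transverse K\"ahler shadow.

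Setting $\iota := \iota_1$ and defining the basic potential $\varphi_\iota$ by $\iota^*d\eta_1 = d\eta_1 + d_B d_B^c\varphi_\iota$ yields the first identity $d\eta_2 = \iota^* d\eta_1$. The relation $\eta_2 = \eta_1 + d_B^c\varphi_\iota$ then follows from the normalization $\eta_i(\xi) = 1$ together with $\mathcal{L}_\xi \varphi_\iota = 0$. The remaining formulas for $\Phi_2$ and $g_2$ are forced by $\Phi_2 \xi = 0$, compatibility of $\Phi_2$ with the transverse holomorphic structure on $\ker\eta_2$, and the Sasakian metric identity $g = \frac{1}{2}d\eta\circ(\Id\otimes\Phi) + \eta\otimes\eta$.
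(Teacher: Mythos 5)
Your framework (reduce to basic potentials via compatibility, join them by the weak geodesic of Lemma \ref{geodesic}, invoke convexity of $\mathcal{M}$) is the right one, and both your reduction to $\eta_2=\eta_1+d_B^c\psi$ and your closing derivation of the formulas for $\Phi_2$ and $g_2$ are exactly what Lemma 2.2 and Definition \ref{compatible} supply. The problem is the step you yourself call the heart of the proof: passing from affineness of $\mathcal{M}(\psi_t)$ along the weak geodesic to a one-parameter family of transverse automorphisms. You do not carry it out, and the route you gesture at does not go through at the available regularity. The identity
\begin{equation*}
\frac{d^2}{dt^2}\mathcal{M}(\psi_t)=\frac{1}{2}\int_M \left|\mathfrak{D}_{d\eta_{\psi_t}}\dot{\psi}_t\right|^2_{d\eta_{\psi_t}}(d\eta_{\psi_t})^n\wedge\eta,
\end{equation*}
which is what would let you read a transverse holomorphic vector field off the vanishing of the second derivative, holds only for smooth geodesics; the geodesic of Lemma \ref{geodesic} is merely weakly $C^2$, and the convexity of Theorem \ref{weakconvexThm} is obtained by Bergman-kernel approximation, from which one cannot localize the vanishing of $d_\tau d_\tau^c\mathcal{M}$ into the pointwise statement $\mathfrak{D}_{d\eta_{\psi_t}}\dot{\psi}_t=0$. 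This is precisely the obstruction that the Berman--Berndtsson uniqueness argument, and Section \ref{section4} of this paper, are designed to avoid; asserting that ``the transverse version must produce a basic transverse holomorphic vector field'' names the difficulty without resolving it.

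What the paper actually does is a perturbation argument that never uses affineness. One first minimizes the functional $\tilde{\mathcal{F}}_\mu=I_\mu-\mathcal{E}/(n+1)$ over the $G_0$-orbit of each cscS potential; properness on the orbit comes from strict convexity along the geodesic rays of Lemma \ref{HamiltonianGeodesic}, finite dimensionality of the Lie algebra of Hamiltonian transverse holomorphic fields, and the comparison Lemma \ref{funcitonal}. This minimization is where the automorphism $\iota$ in the statement is produced. At the minimizer the differential $G_{\nu_i}$ of $\tilde{\mathcal{F}}_\mu$ annihilates $\mathcal{H}^B_{d\eta_{u_i}}$, so the linearized equation $D_{v_i}F|_{u_i}=-G_{\nu_i}$ is solvable. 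One then perturbs the endpoints to $u_i^s=u_i+sv_i$, considers $\mathcal{M}_s=\mathcal{M}+s\tilde{\mathcal{F}}_\mu$, and combines the one-sided derivative bound of Lemma \ref{diff mabuchi increasing} with the quantitative strict convexity of $I_\mu$ along weak geodesics (Proposition \ref{geodist}) to obtain $d(d\eta_{u_0^s},d\eta_{u_1^s})^2\leq o(1)$, whence $d\eta_{u_0}=d\eta_{u_1}$. To complete your write-up you would need either to supply this perturbation scheme or to give a genuine transverse analogue of the direct-image rigidity argument valid for weak geodesics; as it stands the central implication is asserted rather than proved.
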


\medskip

This paper is organized as follow. In Section \ref{section2}, we will recall some preliminary results about Sasakian geometry, in particular, the weak geodesic established in \cite{guan2012regularity}. In Section \ref{section3}, we prove the convexity of the transverse Mabuchi $K$-energy $\mathcal{M}$ along the weak geodesic. In Section \ref{section4}, we give a proof of Theorem \ref{mainthm}, as an application of the convexity of $\mathcal{M}$.

\section{Preliminary Results in Sasakian Geometry}
\label{section2}

There are many structures on Sasakian manifold. A Sasakian manifold $(M,g)$ has a contact structure $(\xi,\eta,\Phi)$, and it also has a one-dimensional foliation $\mathcal{F}_\xi$, called the Reeb foliation, which has a transverse K\"ahler structure. Here, the killing vector field $\xi$ is called the characteristic or Reeb vector field, $\eta$ is called the contact $1$-form, and $\Phi$ is an $(1,1)$-tensor field which defines a complex structure  on the contact sub-bundle $\mathcal{D}=\operatorname{ker}\eta$. In the following, a Sasakian manifold will be denoted by $(M,\xi,\eta,\Phi,g)$, and the quadruple $(\xi,\eta,\Phi,g)$ will be called by a Sasakian structure on a manifold $M$.

Let $(M,\xi,\eta,\Phi,g)$ be a $(2n+1)$-dimensional Sasakian manifold, and let $\mathcal{F}_\xi$ be the characteristic foliation generated by $\xi$. A transverse holomorphic structure on $\mathcal{F}_\xi$ is given by an open covering $\{U_i\}_{i\in A}$ of $M$ and local submersion of $f_i:U_i\to \mathbb{C}^m$ with fibers of dimension $1$, such that for $i,j\in A$, there is a holomorphic isomorphism $\theta_{ij}$ of open sets of $\mathbb{C}^m$ such that $f_i=\theta_{ij}\circ f_j$ on $U_i\cup U_j$.

Now, we consider the quotient bundle of the foliation $\mathcal{F}_\xi$, $\nu(\mathcal{F}_\xi)=TM/L\xi$. The metric $g$ gives a bundle isomorphism $\sigma$ between $\nu(\mathcal{F}_\xi)$ and the contact sub-bundle $\mathcal{D}$, where $\sigma:\nu(\mathcal{F}_\xi)\to \mathcal{D}$ is defined by
$$
\sigma([X])=X-\eta(X)\xi.
$$
By this isomorphism, $\Phi|_\mathcal{D}$ induces a complex structure $\bar{J}$ on $\nu ( \mathcal{F}_\xi )$. $(\mathcal{D},\Phi|_\mathcal{D},d\eta)$ gives $M$ a transverse K\"ahler structure with transverse K\"ahler form $\frac{1}{2}d\eta$ and metric $g^T$ defined by $g^T=\frac{1}{2}d\eta(\cdot,\Phi\cdot)$. For the transverse metric $g^T$, one can define the transverse Levi-Civita connection $\nabla^T$ on $\mathcal{D}$ by
\[
\nabla^T_XY=
\begin{cases}
(\nabla_XY)^p, & X\in \mathcal{D}\\
[\xi,Y]^p, & X=\xi
\end{cases}
\]
where $Y$ is a section of $\mathcal{D}$ and $X^p$ the projection of $X$ onto $\mathcal{D}$. One can check that the transverse Levi-Civita connection is torsion-free and metric compatible. The transverse curvature relating to the above transverse connection will be denoted by $R^T(V,W)Z$, where $V,W\in TM$ and $Z\in \mathcal{D}$. We define the transverse Ricci curvature by
$$
\operatorname{Ric}^T(X,Y)=\langle R^T(X,e_i)e_i,Y\rangle_g,
$$
where $e_i$ is the orthonormal basis of $\mathcal{D}$ and $X,Y\in \mathcal{D}$. Furthermore, $\rho^T=\operatorname{Ric}^T(\Phi\cdot,\cdot)$ is called the transverse Ricci form analog to the K\"{a}hler geometry. The transverse scalar curvature $S^T$ is defined to be the trace of $\rho^T$ with respect to $g^T$. According to standard computation, we have that $S^{T}=S+2n$, where $S$ is the scalar curvature of $g$ in the usual sense.

We say a $p$-form $\theta$ is basic, if it satisfies that
$$
i_\xi\theta=0,\text{ and }L_\xi\theta=0.
$$
It is easy to check that $d\theta$ is also basic, if $\theta$ is, i.e. the exterior differential preserves basic forms. Let $\wedge^p_B(M)$ be the sheaf of germs of basic $p$-forms and $\Omega_B^p(M)=\Gamma(M,\wedge_B^p(M))$ be the set of all sections of $\wedge^p_B(M)$. The basic cohomology can be defined in a usual way(\cite{kacimi1990operateurs}). Let $\mathcal{D}^\mathbb{C}$ be the complexification of the sub-bundle $\mathcal{D}$, and we can decompose it into its eigenspaces with respect to $\Phi|_\mathcal{D}$, that is
$$
\mathcal{D}^\mathbb{C}=\mathcal{D}^{1,0}\oplus\mathcal{D}^{0,1}.
$$
Similarly, we have a splitting of the complexification of the bundle $\wedge^p_B(M)$ of basic $p$-forms on $M$,
$$
\wedge^p_B(M)\otimes\mathbb{C}=\mathop{\oplus}_{i+j=p}\mathop{\wedge}_B^{i,j}(M),
$$
where $\mathop{\wedge}\limits_B\limits^{i,j}(M)$ denotes the bundle of basic forms of type $(i,j)$.
Define $\partial_B$ and $\bar{\partial}_B$ by
\begin{equation*}
  \begin{split}
    \partial_B:\mathop{\wedge}\limits_B\limits^{i,j}(M) \to & \mathop{\wedge}\limits_B\limits^{i,j+1} (M),\\
    \bar{\partial}_B:\mathop{\wedge}\limits_B\limits^{i,j}(M) \to & \mathop{\wedge}\limits_B\limits^{i+1,j} (M),
  \end{split}
\end{equation*}
which is the decomposition of $d$. Let $d_B^c=\frac{1}{2} \sqrt{-1}(\bar{\partial}_B- \partial_B)$, and $d_B =d|_{\wedge^p_B}$. We have $d_B=\partial_B+ \bar{\partial}_B$ and $d_Bd_B^c =\im\partial_B \bar{\partial}_B$, and $d_B^2= (d_B^c)^2 =0$. The basic cohomology groups $H_B^{i,j}(M,\mathcal{F}_\xi)$ are fundamental invariants of a Sasakian structure  which enjoy many of the same properties as the Dolbeault cohomology of a K\"ahler structure. On Sasakian manifolds, the $\partial\bar{\partial}$-lemma holds for basic forms.

\medskip

\begin{mt}[\cite{kacimi1990operateurs}]
  Let $\theta$ and $\theta'$ be two real closed basic forms of type $(1,1)$ on a compact Sasakian manifold $(M,\xi,\eta,\Phi,g)$. If $[\theta]_B=[\theta']_B\in H_B^{1,1}(M,\mathcal{F}_\xi)$, then there is a real-valued basic function $\varphi$ such that
  \begin{equation*}
    \theta=\theta'+\sqrt{-1}\partial_B\bar{\partial}_B\varphi.
  \end{equation*}
\end{mt}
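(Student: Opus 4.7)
The plan is to run the classical Kähler $\partial\bar\partial$-lemma argument, but in the basic (transverse) complex, using the basic Hodge theory of El Kacimi-Alaoui. Set $\omega=\theta-\theta'$; this is a real, closed, basic $(1,1)$-form which is $d_B$-exact by hypothesis. The target is to produce a real basic function $\varphi$ with $\omega=\sqrt{-1}\,\partial_B\bar{\partial}_B\varphi$.

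First I would write $\omega=d_B\alpha$ for some real basic $1$-form $\alpha$, using that $H^1_B$ controls the obstruction. Decompose $\alpha=\alpha^{1,0}+\alpha^{0,1}$ with $\overline{\alpha^{1,0}}=\alpha^{0,1}$. Since $\omega$ is of pure type $(1,1)$, the $(2,0)$ and $(0,2)$ pieces of $d_B\alpha$ vanish, giving $\partial_B\alpha^{1,0}=0$ and $\bar{\partial}_B\alpha^{0,1}=0$, hence $\omega=\bar{\partial}_B\alpha^{1,0}+\partial_B\alpha^{0,1}$.

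Next I would apply the $\bar{\partial}_B$-Hodge decomposition (which holds on a compact Sasakian manifold by the transverse Hodge theory) to $\alpha^{0,1}$: since $\alpha^{0,1}$ is $\bar{\partial}_B$-closed, the $\bar{\partial}_B^*$-exact part disappears and we may write $\alpha^{0,1}=h+\bar{\partial}_B u$ with $h$ harmonic and $u$ a basic function. The transverse Kähler identities (the basic analogue of $\Delta_{\bar{\partial}}=\Delta_\partial$) imply that a $\bar{\partial}_B$-harmonic form is also $\partial_B$-closed, so $\partial_B h=0$; this is the key input where Sasakian/transverse-Kähler structure (not just transverse Hermitian structure) is used. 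Taking complex conjugates yields $\alpha^{1,0}=\bar{h}+\partial_B\bar{u}$, and plugging back gives
\begin{equation*}
\omega=\bar{\partial}_B\partial_B\bar{u}+\partial_B\bar{\partial}_B u=\partial_B\bar{\partial}_B(u-\bar{u}).
\end{equation*}
Since $u-\bar{u}$ is purely imaginary, write $u-\bar{u}=\sqrt{-1}\varphi$ with $\varphi$ a real basic function; then $\omega=\sqrt{-1}\,\partial_B\bar{\partial}_B\varphi$, as desired.

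The main obstacle is not the algebra, which is formally identical to the Kähler proof, but verifying the analytic inputs in the basic setting: the existence of a Green's operator and the Hodge decomposition for the basic Laplacians $\Delta_{d_B}$, $\Delta_{\partial_B}$, $\Delta_{\bar{\partial}_B}$ on a compact Sasakian manifold, together with the identity of these three Laplacians up to the standard factor (the transverse Kähler identities). Both were established by El Kacimi-Alaoui \cite{kacimi1990operateurs} and the result can be quoted directly, so the proof reduces to the decomposition argument above.
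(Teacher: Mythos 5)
Your argument is correct: it is the standard global $\partial\bar\partial$-lemma proof transplanted to the basic complex, and the analytic inputs you isolate (transverse Hodge decomposition and the coincidence of the basic Laplacians for a transversally Kähler, taut foliation) are exactly what El Kacimi-Alaoui provides for the Reeb foliation of a compact Sasakian manifold. The paper itself offers no proof of this proposition --- it simply cites \cite{kacimi1990operateurs} --- so your write-up is essentially the argument that the citation is standing in for, and there is no gap beyond the minor point that the $d_B$-exactness of $\theta-\theta'$ is just the hypothesis $[\theta]_B=[\theta']_B$ rather than something ``controlled by $H^1_B$''.
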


\medskip

Now we consider the deformation of the Sasakian structures. Let us denote the space of all smooth basic functions $\varphi$, i.e. $\xi\varphi=0$ on $(M,\xi,\eta,\Phi,g)$ by $C_B^\infty(M,\xi)$. And specially $$\mathcal{H}(\xi,\eta,\Phi,g)=\{\varphi \in C_B^\infty (M,\xi): \eta_\varphi\wedge (d \eta_\varphi)^n\neq 0\},$$ where $\eta_\varphi =\eta+d^c_B\varphi$, and $d\eta_\varphi=d\eta+d_Bd^c_B\varphi$. The space $\mathcal{H}(\xi,\eta,\Phi,g)$ is contractible, and we will denote it by $\mathcal{H}$ for simplicity. For $\varphi\in\mathcal{H}$, $(\xi,\eta_\varphi,\Phi_\varphi,g_\varphi)$ is also a Sasakian structure on $M$, where
$$
\Phi_\varphi=\Phi-\xi\otimes(d^c_B\varphi)\circ\Phi, \text{ and }g_\varphi=\frac{1}{2} d \eta_\varphi\circ(\mathop{Id}\otimes \Phi_\varphi) +\eta_\varphi\otimes \eta_\varphi.
$$
As in \cite{GuanZhang,guan2012regularity}, we define a functional $\mathcal{I}:\mathcal{H}\to \mathbb{R}$ by
\begin{equation}
\mathcal I (\varphi )=\sum_{p=0}^{n}\frac{n!}{(p+1)!
(n-p)!}\int_{M}\varphi (d\eta )^{n-p}\wedge
(\sqrt{-1}\partial_{B}\bar{\partial }_{B}\varphi )^{p} \wedge \eta .
\end{equation}

Set
\begin{equation*}
  \mathcal{H}_0=\{\varphi\in \mathcal{H}|\mathcal I (\varphi ) =0\},
\end{equation*}
and
\begin{equation*}
  \mathcal{K}=\{\text{transverse K\"ahler form in the basic }(1,1) \text{ class }[d\eta]_B\}.
\end{equation*}
Then we have $\mathcal{H}_0\cong \mathcal{K}$. In \cite{GuanZhang}, Guan and the second author proved that $\mathcal{H}_0$ is totally geodesic and totally convex in $\mathcal{H}$. And the tangent space of $\mathcal{H}_0$ at $\varphi$ is the set
 \begin{equation}
   \left.\mathrm{T}\mathcal{H}_0\right|_{\varphi}=\{\psi\in \mathcal{H}|\int_M \psi (d\eta_\varphi)^n\wedge \eta=0\}.
 \end{equation}

It is well known that both $(\xi,\eta_\varphi, \Phi_\varphi,g_\varphi)$ and $(\xi,\eta,\Phi,g)$ have the same transverse holomorphic structure on $\nu(\mathcal{F}_\xi)$ and the same holomorphic structure on the cone $C(M)$(\cite{boyer2006Sasakian, futaki2009transverse}). Obviously, these deformations of Sasakian structure deform the transverse K\"ahler form in the same basic $(1,1)$ class. As in \cite{boyer2008canonical}, we call this class the basic K\"ahler class of the Sasakian manifold $(M,\xi,\eta,\Phi,g)$. It should be noted that the contact bundle $\mathcal{D}$ may change under such deformations. We define $\mathcal{S}(\xi,\bar{J})$ to be the subset of all structures $(\tilde{\xi},\tilde{\eta}, \tilde{\Phi},\tilde{g})$ in $\mathcal{S}(\xi)$ such that the diagram
  \[
  \xymatrix{
TM \ar[d]_{\pi_\nu}\ar[r]^{\tilde{\Phi}}
            & TM  \ar[d]_{\pi_\nu}\\
\nu(\mathcal{F}_\xi)\ar[r]^{\bar{J}}
            & \nu(\mathcal{F}_\xi)}
  \]
commutes, where $\mathcal{S}(\xi)$ denotes all Sasakian structure $(\tilde{\xi},\tilde{\eta}, \tilde{\Phi},\tilde{g})$, such that $\tilde{\xi}=\xi$. The set $\mathcal{S}(\xi,\bar{J})$ consists of elements of $\mathcal{S}(\xi)$ with the same transverse holomorphic structure $\bar{J}$. For two different Sasakian structure in $\mathcal{S}(\xi,\bar{J})$, we have(\cite{boyer2006Sasakian, boyer2008canonical, Sparks2010Sasaki}):
\begin{yl}
             If $(\xi,\eta,\Phi,g)$ and $(\tilde{\xi},\tilde{\eta}, \tilde{\Phi},\tilde{g})$ are two Sasakian structures in $\mathcal{S}(\xi,\bar{J})$, then there exist real-valued basic functions $\varphi$ and $\psi$ and integral closed $1$-form $\alpha$, such that
  \begin{eqnarray*}
  % \nonumber to remove numbering (before each equation)
    \tilde{\eta} &=& \eta +d_{B}^c\varphi +d\psi +i(\alpha),\\
    \tilde{\Phi} &=& \Phi- \xi\otimes(\tilde{\eta}-\eta)\circ\Phi,\\
    \tilde{g} &=& \frac{1}{2} d\tilde{\eta}\circ(Id\otimes \tilde\Phi)+\tilde{\eta}\otimes\tilde{\eta},
  \end{eqnarray*}
  where $d\tilde{\eta}=d\eta+d_Bd^c_B\varphi$. In particular, if this two Sasakian structures induce the same holomorphic structure on the cone $C(M)$, we have that  $\tilde{\eta} = \eta +d_{B}^c\varphi $.
\end{yl}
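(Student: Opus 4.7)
The plan is to set $\beta = \tilde\eta - \eta$ and to decompose $\beta$ via transverse Hodge theory on the Reeb foliation. First, I would show that $\beta$ is a basic real $1$-form: $i_\xi\beta = \tilde\eta(\xi) - \eta(\xi) = 1 - 1 = 0$, while $L_\xi\beta = 0$ because $\xi$ is a Killing vector field for both Sasakian structures, so its flow preserves both contact $1$-forms. Next, using the defining property of $\mathcal{S}(\xi, \bar J)$ — namely that both $\Phi$ and $\tilde\Phi$ descend through $\pi_\nu$ to the same almost complex structure $\bar J$ on $\nu(\mathcal{F}_\xi)$ — I would show that $d\eta$ and $d\tilde\eta$ are both basic forms of pure type $(1,1)$ for $\bar J$, and hence $d\beta = d\tilde\eta - d\eta$ is a basic real $(1,1)$-form.

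Decompose $\beta^{0,1}$, the $(0,1)$-part of $\beta$ under the transverse complex structure. Since $d\beta$ is of pure type $(1,1)$, one has $\bar\partial_B\beta^{0,1} = 0$, so $\beta^{0,1}$ represents a class in $H_B^{0,1}(M,\mathcal{F}_\xi)$. By the transverse K\"ahler Hodge decomposition there exist a basic $\bar\partial_B$-harmonic $(0,1)$-form $h^{0,1}$ and a complex-valued basic function $u$ with $\beta^{0,1} = h^{0,1} + \bar\partial_B u$. Writing $u = \psi - \sqrt{-1}\varphi$ with $\psi,\varphi$ real basic functions, using $\beta = \beta^{1,0} + \overline{\beta^{0,1}}$ together with $d_B^c = \frac{\sqrt{-1}}{2}(\bar\partial_B - \partial_B)$, a short calculation yields
\begin{equation*}
\beta = h + d\psi + d_B^c\varphi,
\end{equation*}
where $h = h^{1,0} + h^{0,1}$ is a real basic harmonic $1$-form representing a class in $H_B^1(M,\mathcal{F}_\xi;\mathbb{R})$. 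In particular $d\tilde\eta = d\eta + d_B d_B^c \varphi$ as claimed.

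The constraint that both $\eta$ and $\tilde\eta$ are contact forms of Sasakian structures with the same Reeb $\xi$ forces the class $[h]$ to be integral: concretely, $\eta$ and $\tilde\eta$ each serve as a connection-type $1$-form for the (possibly different) quasi-regular $S^1$-bundle structure determined by $\xi$, and compatibility of the two bundle structures forces the difference of periods to take values in $\mathbb{Z}$. This gives $h = i(\alpha)$ for an integral closed $1$-form $\alpha$ on $M$. Once $\tilde\eta$ is determined, the formula $\tilde\Phi = \Phi - \xi\otimes(\tilde\eta - \eta)\circ\Phi$ is forced by the three requirements $\tilde\Phi(\xi) = 0$, $\tilde\Phi$ descends on $\nu(\mathcal{F}_\xi)$ to $\bar J$, and $\tilde\Phi^2 = -\mathrm{Id} + \tilde\eta\otimes\xi$; the expression for $\tilde g$ is then the standard Sasakian reconstruction of a compatible Riemannian metric from the triple $(\xi,\tilde\eta,\tilde\Phi)$.

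For the ``in particular'' clause, if the two Sasakian structures induce the same holomorphic structure on the cone $C(M) = M \times \mathbb{R}^+$, then the complex-valued $1$-form $\frac{dr}{r} - \sqrt{-1}\eta$ annihilating $T^{1,0}C(M)$ must agree, under the same complex structure on $C(M)$, with $\frac{dr}{r} - \sqrt{-1}\tilde\eta$; following the standard cone-construction argument this forces both the harmonic contribution $h$ and the $d\psi$ term to vanish, leaving $\tilde\eta = \eta + d_B^c\varphi$. The main obstacles are making precise the integrality of the harmonic class $[h]$ and carrying out rigorously the cone-holomorphic-structure step, both of which rely on the dictionary between the Sasakian data $(\xi,\eta,\Phi,g)$ and the K\"ahler cone data on $C(M)$.
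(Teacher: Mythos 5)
The paper does not prove this lemma: it is quoted verbatim from the cited references \cite{boyer2006Sasakian, boyer2008canonical, Sparks2010Sasaki}, so there is no in-paper argument to compare against. Judged on its own, the core of your proposal --- $\beta=\tilde\eta-\eta$ is basic (for $L_\xi\beta=0$ the Cartan formula $L_\xi\eta=d(i_\xi\eta)+i_\xi d\eta=0$ is more direct than invoking the Killing property), $d\beta$ is of type $(1,1)$ for $\bar J$, and the transverse Hodge/$\partial_B\bar\partial_B$ decomposition of El Kacimi-Alaoui yields $\beta=h+d\psi+d_B^c\varphi$ with $h$ basic harmonic --- is exactly the standard route in those references, and the derivations of $\tilde\Phi$ and $\tilde g$ from $\tilde\eta$ are correctly identified as forced.

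The two steps you defer are, however, genuine gaps rather than routine verifications. First, the integrality of $[h]$ cannot be obtained from ``the quasi-regular $S^1$-bundle structure determined by $\xi$'': a generic Sasakian structure is irregular, the Reeb orbits are not circles, and there is no principal $S^1$-bundle for which $\eta$ is a connection form, so the argument as stated has no content in precisely the cases the lemma must cover. (In the sources the term $i(\alpha)$ arises from identifying which closed basic $1$-forms can be absorbed by foliation-preserving diffeomorphisms, not from a period constraint that $\tilde\eta$ automatically satisfies.) Second, the cone argument proves too much as written: if the two complex structures on $M\times\mathbb{R}^+$ literally coincided, then both $d\log r+\sqrt{-1}\eta$ and $d\log r+\sqrt{-1}\tilde\eta$ would be of type $(1,0)$ for the same $J$, so $\sqrt{-1}\beta$ would be a real form of pure type $(1,0)$ and hence zero --- killing the $d_B^c\varphi$ term as well, contradicting the lemma (and Theorem \ref{mainthm}, where compatible structures do differ by $d_B^c\varphi$). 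The correct statement is that the deformed cone is biholomorphic to the original one via a map rescaling the radial coordinate by $e^{\varphi/2}$, and ``same holomorphic structure'' must be read through that identification; your $(0,1)$-form computation does not survive this reinterpretation and needs to be redone, e.g.\ by tracking which part of $\beta$ obstructs the existence of such a biholomorphism.
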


\begin{dy}
\label{compatible}
  Given any Sasakian manifold $(M,\xi,\eta,\Phi, g)$, we say another Sasakian metric $g'$ is compatible with the Sasakian structure of  $(M,\xi,\eta,\Phi, g)$, if they have the same Reeb vector field,  the same transverse holomorphic structure $\nu(\mathcal{F}_\xi)$ and the same holomorphic structure on the cone $C(M)$.
\end{dy}

\medskip

By Lemma 2.2.,  if Sasakian structure $(\xi ,\eta ',\Phi ', g')$ is compatible with $(\xi,\eta,\Phi, g)$, then there must exists a basic function in $\varphi \in \mathcal{H}$ such that $\eta '=\eta +d_{B}^c\varphi $. In the following, we also say $\varphi $ the transverse K\"ahler potential of $g'$.  Similar to the K\"ahler case, we know that the average
$$
\bar{S}^T=\frac{\int_MS_\varphi^T(d\eta_\varphi)^n \wedge\eta_\varphi}{\int_M(d\eta_\varphi)^n \wedge\eta_\varphi} =\frac{\int_M2n\rho_\varphi^T\wedge(d\eta_\varphi)^{n-1} \wedge\eta_\varphi}{\int_M(d\eta_\varphi)^n \wedge\eta_\varphi}
$$
is a constant independent of the choice of $\varphi\in \mathcal{H}$. We say $g_\varphi \in \mathcal{K}$(or $\varphi\in \mathcal{H}$) is a constant scalar curvature Sasakian(cscS) metric, if $S_\varphi=\bar{S}^T+2n$, which is equivalent to $S_\varphi^T=\bar{S}^T$. Indeed, this equation is an elliptic equation of forth order. As in the K\"ahler case, the Mabuchi K-energy is a useful tool to consider such metrics. In the Sasakian case, the Mabuchi K-energy is defined in \cite{futaki2009transverse}, and we recall it in the following lemma.
\begin{yl}
  Let $\varphi_1$ and $\varphi_2$ are two basic functions in $\mathcal{H}$ and $\varphi_t\text{ }(t\in[a,b])$ be a path in $\mathcal{H}$ connecting $\varphi_1$ and $\varphi_2$. Then
  \begin{equation*}
  \begin{split}
  \mathcal{M}(\varphi_1,\varphi_2)&=-\int_a^b\int_M \dot{\varphi}_t (S_{\varphi_t}^T-\bar{S}^T) (d\eta_{\varphi_t})^n\wedge\eta_{\varphi_t}\wedge dt\\
  &=-\int_a^b\int_M \dot{\varphi}_t (S_{\varphi_t}^T-\bar{S}^T) (d\eta_{\varphi_t})^n\wedge\eta\wedge dt
  \end{split}
  \end{equation*}
  is independent of the path $\varphi_t$, where $\dot{\varphi}_t=\frac{d\varphi}{dt}$. Furthermore, $\mathcal{M}$ satisfies the $1$-cocycle condition
  \begin{equation}
    \mathcal{M}(\varphi_1, \varphi_2)+ \mathcal{M}( \varphi_2, \varphi_3)= \mathcal{M}(\varphi_1, \varphi_3)
  \end{equation}
  and
  \begin{equation}
  \label{Mabuchi1}
    \mathcal{M}(\varphi_1+C_1,\varphi_2+C_2)=\mathcal{M}(\varphi_1,\varphi_2)
  \end{equation}
  for any $C_1,C_2\in \mathbb{R}$.
\end{yl}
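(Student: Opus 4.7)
The plan is to verify three things in order: equivalence of the two integral expressions, path independence of the resulting integral, and the cocycle / constant-shift properties. For the first, the difference between the two integrands is
\[
\dot\vphi_t(S^T_{\vphi_t}-\bar S^T)(d\eta_{\vphi_t})^n\wedge(\eta_{\vphi_t}-\eta)=\dot\vphi_t(S^T_{\vphi_t}-\bar S^T)(d\eta_{\vphi_t})^n\wedge d_B^c\vphi_t.
\]
Both $(d\eta_{\vphi_t})^n$ and $d_B^c\vphi_t$ are basic, so their wedge is a basic form of degree $2n+1$ on the $(2n+1)$-manifold $M$. But basic forms are annihilated by $i_\xi$, hence have degree at most $2n$; the wedge vanishes pointwise and the two expressions for $\mc M(\vphi_1,\vphi_2)$ coincide.

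For path independence, I introduce the $1$-form on $\mc H$ given by $\alp_\vphi(\psi)=-\int_M\psi\,(S^T_\vphi-\bar S^T)(d\eta_\vphi)^n\wedge\eta$, so that the right-hand side reads $\int_a^b\alp_{\vphi_t}(\dot\vphi_t)\,dt$. To show $d\alp=0$ on $\mc H$, I take a smooth two-parameter family $\vphi_{s,t}$ and compute $\pa_s[\alp(\pa_t\vphi)]-\pa_t[\alp(\pa_s\vphi)]$. This rests on two variation formulas: first,
\[
\tfrac{d}{dt}\bigl((d\eta_{\vphi_t})^n\wedge\eta\bigr)=n\,d_Bd_B^c\dot\vphi_t\wedge(d\eta_{\vphi_t})^{n-1}\wedge\eta,
\]
and second, the transverse analogue of Mabuchi's scalar-curvature variation
\[
\dot S^T_{\vphi_t}=-(\laplace^T_{\vphi_t})^2\dot\vphi_t+\pr{\rho^T_{\vphi_t}}{d_Bd_B^c\dot\vphi_t}_{g^T_{\vphi_t}}.
\]
After substitution and two rounds of integration by parts on the basic complex---using $d_Bd_B^c=\im\pa_B\pb_B$ together with the Stokes-type identity $\int_M d_B\beta\wedge\eta=0$ for any basic $(2n-1)$-form $\beta$ (which follows from $d(\beta\wedge\eta)=d_B\beta\wedge\eta-\beta\wedge d\eta$ and the degree-$(2n+1)$ vanishing used above)---the expression becomes manifestly symmetric in $\pa_s\vphi$ and $\pa_t\vphi$. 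Hence $d\alp=0$ and the integral depends only on the endpoints.

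The cocycle relation then follows immediately by concatenating a path from $\vphi_1$ to $\vphi_2$ with a path from $\vphi_2$ to $\vphi_3$ and splitting the integral at $\vphi_2$. For (\ref{Mabuchi1}), I use the path $\vphi_t+L(t)$ where $\vphi_t$ joins $\vphi_1$ to $\vphi_2$ and $L\colon[a,b]\to\RR$ satisfies $L(a)=C_1$, $L(b)=C_2$: since $d_B^cL(t)=0$, the Sasakian data $\eta_{\vphi_t+L},\,d\eta_{\vphi_t+L},\,S^T_{\vphi_t+L}$ are unchanged, and the extra contribution $\int_a^b\dot L(t)\int_M(S^T_{\vphi_t}-\bar S^T)(d\eta_{\vphi_t})^n\wedge\eta\,dt$ vanishes identically by the definition of $\bar S^T$. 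The main obstacle is the closedness calculation $d\alp=0$: it requires carrying the fourth-order variation $(\laplace^T)^2\dot\vphi$ through two integrations by parts while tracking the Ricci contributions, and then recognizing the coupling with $\rho^T$ as giving a symmetric bilinear form in $(\pa_s\vphi,\pa_t\vphi)$. This is a direct transverse adaptation of Mabuchi's K\"ahler-case computation, with the basic $\pa\pb$-lemma (Proposition 2.1) supplying the needed control over exact basic $(1,1)$-forms; once set up, the remaining algebra is routine.
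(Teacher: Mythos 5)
The paper itself gives no proof of this lemma: it is recalled from Futaki--Ono--Wang \cite{futaki2009transverse}, so there is no internal argument to compare against. Your sketch supplies the standard Mabuchi--Bando closedness argument transplanted to the transverse setting, and its structural ingredients are sound: the observation that a basic $(2n+1)$-form on a $(2n+1)$-manifold must vanish (being annihilated by $i_\xi$ with $\xi$ nowhere zero) correctly justifies both the replacement of $\eta_{\varphi_t}$ by $\eta$ and the Stokes-type identity $\int_M d_B\beta\wedge\eta=0$ that powers the integrations by parts; the cocycle relation by concatenation is immediate once path-independence is known; and the constant-shift invariance reduces, as you say, to $\int_M(S^T_{\varphi_t}-\bar S^T)(d\eta_{\varphi_t})^n\wedge\eta=0$, which is exactly the definition of $\bar S^T$ combined with the same degree argument.

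The one step that would fail as written is the quoted variation of the transverse scalar curvature. Since $S^T_\varphi$ is by definition the trace of $\rho^T_\varphi$ with respect to $g^T_\varphi$, the two terms in the variation must carry the \emph{same} sign, $\dot S^T_{\varphi_t}=-c_1(\laplace^T_{\varphi_t})^2\dot\varphi_t-c_2\langle\rho^T_{\varphi_t},d_Bd_B^c\dot\varphi_t\rangle$ with $c_1,c_2>0$ (up to the harmless factors of $2$ coming from $g^T=\tfrac{1}{2}d\eta(\cdot,\Phi\cdot)$); compare the paper's own Lichnerowicz operator $L^B_{d\eta}$, in which $\triangle^2_B$ and $4(\rho^T,\im\pa_B\pb_B\cdot)$ likewise appear with the same sign. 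You wrote the Ricci pairing with the opposite sign, and this is not cosmetic for your argument: in the symmetry computation the trace part of $\int_M\psi\langle\rho^T,d_Bd_B^c\chi\rangle(d\eta_u)^n\wedge\eta$, namely a multiple of $\int_M\psi\,S^T\triangle_B\chi\,(d\eta_u)^n\wedge\eta$, has to cancel against the $S^T\triangle_B\chi$ term produced by differentiating the volume form against $(S^T-\bar S^T)$. With your sign the two contributions add instead of cancelling, the leftover $\int_M\psi\,S^T\triangle_B\chi\,(d\eta_u)^n\wedge\eta$ is not symmetric in $(\psi,\chi)$, and $d\alpha=0$ does not follow. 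With the corrected sign the remaining bookkeeping closes exactly as you describe (the off-trace part becomes symmetric after two integrations by parts using $d_B\rho^T=0$), so this is a repairable slip rather than a wrong approach.
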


In view of \eqref{Mabuchi1}, $\mathcal{M}:\mathcal{H}\times \mathcal{H}\to \mathbb{R}$ factors through $\mathcal{H}_0\times\mathcal{H}_0$. Hence we can define the mapping $\mathcal{M}:\mathcal{K}\times\mathcal{K}\to \mathbb{R}$ by the identity $\mathcal{K}\cong\mathcal{H}_0$,
\begin{equation*}
\mathcal{M}(d\eta_{\varphi_1},d\eta_{\varphi_2}):=\mathcal{M}(\varphi_1,\varphi_2).
\end{equation*}

\begin{dy}
  The mapping $\mu:\mathcal{K}\to \mathbb{R}$, where $\mu(d\eta_\varphi)$ is defined by
  $$
  \mathcal{M}(d\eta_\varphi)=\mathcal{M}(d\eta,d\eta_\varphi)
  $$
  is called the K-energy map of the transverse K\"ahler class in $[d\eta]_B$. The mapping $\mathcal{M}:\mathcal{H}\to \mathbb{R}$, $\mathcal{M}(\varphi)=\mathcal{M}(0,\varphi)$ is also called by the K-energy map of the space $\mathcal{H}$.
\end{dy}

It is easy to see that cscS metric is a critical point of $\mathcal{M}$. In order to consider the uniqueness of such cscS metrics in the space $\mathcal{H}$, we will consider the convexity of the K-energy along the geodesics in $\mathcal{H}$.

In \cite{guan2012regularity}, Guan and the second author studied the geodesic equation in $\mathcal{H}$. Here, we recall some results. The Weil-Peterson metric in the space $\mathcal{H}$ is defined as
\begin{equation*}
  (\psi_1,\psi_2)_\varphi=\int_M\psi_1\psi_2(d\eta_\varphi)^n\wedge\eta, \text{ }\forall \psi_1,\psi_2\in T\mathcal{H}.
\end{equation*}
A nature connection of the metric can be defined to be
\begin{equation}
\label{connectionWP}
  D_{\dot{\varphi}}\psi=\dot{\psi}-\frac{1}{4}\pr{d_B\dot{\varphi}}{d_B\psi}_{g_\varphi}
\end{equation}
where $\psi\in C_B^\infty(M,\xi)=T\mathcal{H}$. And in particular, if $\varphi_t(t\in[a,b])$ is a geodesic, then it satisfies that
\begin{equation}
\label{geodesic1}
  \frac{\partial^2\varphi_t}{\partial t^2}-\frac{1}{4}|d_B\frac{\partial \varphi_t}{\partial t}|^2_{g_\varphi}=0.
\end{equation}

In \cite{guan2012regularity}, Guan and the second author reduced the geodesic equation \eqref{geodesic1} to the Dirichlet problem of a degenerate Monge-Amp\`{e}re type equation on the K\"ahler cone $C(M)=M\times \mathbb{R}^+$. And for convenience, we recall the key observation. They denoted a new function $\psi$ on $M\times [1,\frac{3}{2}] \subset C(M)$ by converting the time variable $t$ to the radial variable $r$ as follow,
\begin{equation}
  \psi(\cdot,r)= \varphi_{2(r-1)}(\cdot)+ 4\log r.
\end{equation}
By setting a $(1,1)$-form on $M \times [1,\frac{3}{2}]$ such that
\begin{equation}
  \Omega_\psi=\tilde\omega+\frac{r^2}{2}\sqrt{-1} (\partial\bar\partial\psi - \frac{\partial \psi}{\partial r} \partial\bar\partial r),
\end{equation}
where $\tilde{\omega}=\frac{1}{2}dd^cr^2$ is the fundamental form of the K\"ahler cone, they concluded that the geodesic equation \eqref{geodesic1} is equivalent to the following degenerate Monge-Amp\`{e}re type equation
\begin{equation}
\label{geo}
  (\Omega_\psi)^{n+1}=0, \text{ on }M\times[1,\frac{3}{2}].
\end{equation}

In this paper, we give another description of the geodesic equation similar to the K\"ahler case. And we will reduce it to a degenerate transverse Monge-Amp\`{e}re type equation on $M\times D$, where $D$ is an annulus in $\mathbb{C}$. We let $t=\log|\tau|$, and $\Psi(\cdot,\tau) =\varphi_t(\cdot)$, i.e. $\Psi$ is a radial function defined on $M\times D$, where $D=\{z\in\mathbb{C}| a\leq\log{z}\leq b\}$.

\begin{mt}
  The geodesic equation \eqref{geodesic1} or \eqref{geo} can be written as the following transverse Monge-Amp\`{e}re equation on $X=M\times D$ for the function $\Psi$,
  \begin{equation}
    (\pi^*d\eta+\tilde{d}\tilde{d}^c\Psi)^{n+1}\wedge\eta=0,
  \end{equation}
  where $\pi$ is the projection from $X$ to $M$ and $$\tilde{d}=d_B+d_\tau\text{ and }\tilde{d}^c=d_B^c+d_\tau^c, \text{ } d_\tau^C= \frac{\sqrt{1}}{2} (\bar\partial_\tau -\partial_\tau).$$
\end{mt}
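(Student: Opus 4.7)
The plan is a direct algebraic reformulation: I expand $(\pi^{*}d\eta+\tilde{d}\tilde{d}^{c}\Psi)^{n+1}\wedge\eta$ as a top-degree form on $M\times D$ and match it against the geodesic ODE \eqref{geodesic1}. Since $\Psi(x,\tau)=\varphi_{t}(x)$ with $t=\log|\tau|$, one has $\partial_{\tau}t=\frac{1}{2\tau}$ and $\bar{\partial}_{\tau}t=\frac{1}{2\bar{\tau}}$, so writing $\tilde{d}\tilde{d}^{c}=\sqrt{-1}\,\tilde{\partial}\tilde{\bar{\partial}}$ with $\tilde{\partial}=\partial_{B}+\partial_{\tau}$ and differentiating piecewise yields
\begin{equation*}
\tilde{d}\tilde{d}^{c}\Psi=d_{B}d_{B}^{c}\varphi_{t}+\frac{\sqrt{-1}}{2\tau}d\tau\wedge\bar{\partial}_{B}\dot{\varphi}_{t}+\frac{\sqrt{-1}}{2\bar{\tau}}\partial_{B}\dot{\varphi}_{t}\wedge d\bar{\tau}+\frac{\sqrt{-1}\,\ddot{\varphi}_{t}}{4|\tau|^{2}}d\tau\wedge d\bar{\tau}.
\end{equation*}
Denoting the two cross terms collectively by $\beta$ and the pure $d\tau\wedge d\bar{\tau}$ term by $\gamma$, set $\omega_{\Psi}:=\pi^{*}d\eta+\tilde{d}\tilde{d}^{c}\Psi=\pi^{*}d\eta_{\varphi_{t}}+\beta+\gamma$.

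In the multinomial expansion of $\omega_{\Psi}^{n+1}$ the vast majority of terms vanish: $(d\eta_{\varphi_{t}})^{n+1}=0$ on $M$ (transverse complex dimension is $n$), $\gamma^{2}=0$, $\beta^{3}=0$, $\beta\wedge\gamma=0$ (repetition of $d\tau$ or $d\bar{\tau}$), and $\pi^{*}(d\eta_{\varphi_{t}})^{n}\wedge\beta=0$ because wedging a basic $(n,n)$-form on $M$ with an additional basic $(1,0)$- or $(0,1)$-factor exceeds the allowed transverse type. Only the two mixed powers remain,
\begin{equation*}
\omega_{\Psi}^{n+1}=(n+1)(d\eta_{\varphi_{t}})^{n}\wedge\gamma+\binom{n+1}{2}(d\eta_{\varphi_{t}})^{n-1}\wedge\beta^{2},
\end{equation*}
and a short computation gives $\beta^{2}=\frac{1}{2|\tau|^{2}}\partial_{B}\dot{\varphi}_{t}\wedge\bar{\partial}_{B}\dot{\varphi}_{t}\wedge d\tau\wedge d\bar{\tau}$.

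The last step invokes the transverse K\"ahler pointwise identity
\begin{equation*}
n\,(d\eta_{\varphi_{t}})^{n-1}\wedge\sqrt{-1}\,\partial_{B}\dot{\varphi}_{t}\wedge\bar{\partial}_{B}\dot{\varphi}_{t}=\tfrac{1}{4}|d_{B}\dot{\varphi}_{t}|^{2}_{g_{\varphi_{t}}}\,(d\eta_{\varphi_{t}})^{n},
\end{equation*}
which follows from the classical formula $\omega^{n-1}\wedge\sqrt{-1}\,\partial f\wedge\bar{\partial}f=\frac{1}{n}|\partial f|^{2}_{\omega}\omega^{n}$ together with $d\eta_{\varphi_{t}}=2\omega_{\varphi_{t}}$ and $|d_{B}f|^{2}=2|\partial_{B}f|^{2}$. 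Substituting produces the clean factorization
\begin{equation*}
\omega_{\Psi}^{n+1}\wedge\eta=\frac{(n+1)\sqrt{-1}\,d\tau\wedge d\bar{\tau}}{4|\tau|^{2}}\left(\ddot{\varphi}_{t}-\frac{1}{4}|d_{B}\dot{\varphi}_{t}|^{2}_{g_{\varphi_{t}}}\right)(d\eta_{\varphi_{t}})^{n}\wedge\eta,
\end{equation*}
so that $\omega_{\Psi}^{n+1}\wedge\eta=0$ on $M\times D$ is equivalent to the geodesic equation \eqref{geodesic1}. The only real obstacle is the bookkeeping in the multinomial expansion together with tracking normalization constants so that the factor $\tfrac{1}{4}$ appearing in \eqref{geodesic1} is reproduced correctly; no analytic step is involved, since everything reduces to algebra on basic and transverse forms.
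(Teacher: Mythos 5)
Your proposal is correct and is essentially the paper's own argument: the paper simply asserts the identity $(\pi^*d\eta+\tilde d\tilde d^c\Psi)^{n+1}\wedge\eta=\tfrac{n+1}{4|\tau|^2}\bigl(\ddot\varphi_t-\tfrac14|d_B\dot\varphi_t|^2_{g_{\varphi_t}}\bigr)\sqrt{-1}\,\partial\tau\wedge\bar\partial\tau\wedge(d\eta_{\varphi_t})^n\wedge\eta$ as a ``direct computation,'' and your multinomial expansion (with the correct normalizations $d\eta_\varphi=2\omega_\varphi$ and $|d_Bf|^2=2|\partial_Bf|^2$ producing the factor $\tfrac14$) is exactly that computation carried out in detail. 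The only difference is that the paper additionally cross-checks the result against the Guan--Zhang cone formulation $(\Omega_\psi)^{n+1}=0$ via a ratio of top forms, which is not needed for the equivalence with \eqref{geodesic1}.
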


\begin{proof}
  Direct computation implies that
  \begin{equation}
  \label{GEO}
    \begin{split}
     &(\pi^*d\eta+\tilde{d}\tilde{d}^c\Psi)^{n+1}\wedge\eta\\
%    =&(\pi^*d\eta+\sqrt{-1} \partial_B\bar\partial_B \Psi +\sqrt{-1} \partial_B\bar\partial_\tau \Psi +\sqrt{-1} \partial_\tau\bar\partial_B \Psi+ \sqrt{-1} \partial_\tau\bar\partial_\tau \Psi)^{n+1}\wedge\eta\\
%    =&(\pi^*d\eta_\varphi +\sqrt{-1} \partial_B\bar\partial_\tau \Psi +\sqrt{-1} \partial_\tau\bar\partial_B \Psi+ \sqrt{-1} \partial_\tau \bar\partial_\tau \Psi)^{n+1}\wedge\eta\\
%    =&(n+1)\sqrt{-1} \Psi_{\tau\bar\tau} \partial \tau \wedge \bar \partial \tau \wedge (\pi^*d\eta_\varphi)^n\wedge \eta +\\
%    &\frac{n(n+1)}{2}(\sqrt{-1} \partial_B \Psi_{\bar \tau}\wedge \bar \partial \tau ) \wedge (\sqrt{-1}\partial\tau\wedge \bar\partial_B \Psi_\tau)\wedge (\pi^*d\eta_\varphi)^{n-1}\wedge \eta+\\
%    &\frac{n(n+1)}{2}(\sqrt{-1}\partial \tau\wedge \bar\partial_B \Psi_{\tau} ) \wedge (\sqrt{-1} \partial_B \Psi_{\bar \tau}\wedge \bar\partial\tau)\wedge (\pi^*d\eta_\varphi)^{n-1}\wedge \eta\\
%    =&(n+1)\sqrt{-1} \Psi_{\tau\bar\tau} \partial \tau \wedge \bar \partial \tau \wedge (\pi^*d\eta_\varphi)^n\wedge \eta -\\
%    &(n+1)\left|\partial_B\Psi_{\bar{\tau}}\right|^2_{\pi^*d\eta_\varphi}\sqrt{-1} \partial \tau \wedge \bar \partial \tau \wedge (\pi^*d\eta_\varphi)^n\wedge \eta\\
    =&\frac{n+1}{4\left| \tau \right|^2}(\frac{\partial^2 \varphi}{\partial t^2}-\frac{1}{4}| d_B \frac{\partial \varphi}{\partial t} |^2_{g_\varphi})\sqrt{-1} \partial \tau \wedge \bar \partial \tau \wedge (\pi^*d\eta_\varphi)^n\wedge \eta.
    \end{split}
  \end{equation}

  According to the computation of \cite{guan2012regularity}, we know that
  \begin{equation}
    (\Omega_\psi)^{n+1}=(n+1)2^{-n}r^{2n+3} (\frac{\partial^2 \varphi}{\partial t^2}-\frac{1}{4}| d_B \frac{\partial \varphi}{\partial t} |^2_{g_\varphi}) dr\wedge \eta \wedge (d\eta_{\varphi})^n,
  \end{equation}
  and
  \begin{equation}
    \tilde{\omega}^{n+1}=(n+1)2^{-n}r^{2n+3} dr\wedge \eta \wedge (d\eta)^n.
  \end{equation}

  Furthermore, we have that
  \begin{equation}
  \frac{(\pi^*d\eta+\tilde{d}\tilde{d}^c\Psi)^{n+1}\wedge\eta}{\frac{n+1}{4\left| \tau \right|^2}\sqrt{-1} \partial \tau \wedge \bar \partial \tau \wedge (\pi^*d\eta)^n\wedge \eta}=\frac{(\Omega_\psi)^{n+1}}{\tilde{\omega}^{n+1}},
  \end{equation}
  which implies the result required.
\end{proof}

The equation \eqref{geodesic1} usually does not admit a smooth solution, i.e. we can not always find smooth geodesic in $\mathcal{H}$. But according to \cite{guan2012regularity} (Theorem 1.), there exists a unique weak $C^2$ solution $\Psi$, i.e. we have the following lemma.

\begin{yl}
\label{geodesic}
For any two functions $\varphi_a,\varphi_b\in \mathcal{H}$, there exists a unique geodesic path $\Psi$ connecting them, such that $\Psi|_{M\times {a}}=\varphi_a$, and $\Psi|_{M\times {b}}=\varphi_b$. In particular,
\begin{equation*}
||\Psi||_{C^1(M\times D)}+\mathop{\sup}_{M\times D} |\triangle \Psi|\leq C,
\end{equation*}
where $C$ is a constant depending only on $(\xi,\eta,\Phi,g)$, $||\varphi_a||_{C^{2,1}}$ and $||\varphi_b||_{C^{2,1}}$. Furthermore, $\Psi(\cdot,t)$ is basic and $\pi^*d\eta+\tilde{d}\tilde{d}^c\Psi\geq 0$ in the sense of $L^\infty$ on $M\times D$.
\end{yl}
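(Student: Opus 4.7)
The plan is to reduce Lemma~\ref{geodesic} to the degenerate transverse Monge--Amp\`ere equation of Proposition~2.5 (equivalently, to the Dirichlet problem on the K\"ahler cone treated in \cite{guan2012regularity}) and then follow Chen's strategy from the K\"ahler case. First I would perturb the equation: for each $\varepsilon>0$, seek a $\tau$-rotation invariant solution $\Psi_\varepsilon$ on $X=M\times D$, basic in the $M$-direction, of
\begin{equation*}
(\pi^*d\eta+\tilde d\tilde d^c\Psi_\varepsilon)^{n+1}\wedge\eta = \varepsilon\,\Omega_0,
\end{equation*}
with $\Psi_\varepsilon|_{M\times\{a\}}=\varphi_a$, $\Psi_\varepsilon|_{M\times\{b\}}=\varphi_b$, where $\Omega_0$ is a fixed positive $(n+2)$-form on $X$. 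For each $\varepsilon>0$ this is a uniformly elliptic transverse complex Monge--Amp\`ere equation, solvable by the continuity method provided one supplies good barriers. Such barriers come from interpolating between $\varphi_a$ and $\varphi_b$ and adding a multiple of the convex-in-$\log|\tau|$ quantity $(\log|\tau|-a)(b-\log|\tau|)$, giving a smooth subsolution with the correct boundary values, and similarly a supersolution.

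The bulk of the work is to establish the following a priori estimates, uniformly in $\varepsilon$: a $C^0$ bound via the maximum principle with these sub/supersolutions as barriers; a $C^1$ bound obtained first on the boundary from the barriers and then propagated to the interior by a Blocki-type gradient estimate transplanted to foliated coordinates; and a uniform bound on the transverse Laplacian via Yau's computation applied to
\begin{equation*}
\log\operatorname{tr}_{\omega_{\mathrm{ref}}}(\omega_{\mathrm{ref}}+\tilde d\tilde d^c\Psi_\varepsilon)-A\Psi_\varepsilon,
\end{equation*}
where $\omega_{\mathrm{ref}}=\pi^*d\eta+\frac{\sqrt{-1}}{2}d\tau\wedge d\bar\tau$ is a background transverse K\"ahler form on $X$ and $A$ is chosen larger than twice the absolute value of the bisectional curvature lower bound of $\omega_{\mathrm{ref}}$. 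Because all data and solutions are $\xi$-invariant and rotation invariant in $\tau$, one may pass to foliated charts on $M$ and reduce every local computation to the K\"ahler case on the leaf space times $D$.

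Given the uniform $C^{1,1}$ bounds, I would pass to the limit $\varepsilon\to 0$: along a subsequence $\Psi_\varepsilon\to\Psi$ in $C^{1,\alpha}(X)$ for every $\alpha<1$, with weak-$*$ convergence of the second derivatives in $L^\infty$. This yields the claimed estimate $\|\Psi\|_{C^1(M\times D)}+\sup_{M\times D}|\triangle\Psi|\le C$ together with the nonnegativity $\pi^*d\eta+\tilde d\tilde d^c\Psi\ge 0$ in the $L^\infty$ sense. Uniqueness is a standard Bedford--Taylor comparison argument: if $\Psi_1,\Psi_2$ are two such solutions, applying the comparison principle to $\Psi_1-\Psi_2$ and then exchanging the roles forces $\Psi_1=\Psi_2$. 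Basicness of $\Psi(\cdot,\tau)$ is inherited from each $\Psi_\varepsilon$, since at each $\varepsilon$-level uniqueness forces the solution to share the $\xi$-invariance and rotational symmetry of the boundary data, and basicness is preserved under $C^1$ limits.

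The principal obstacle is the uniform Laplacian estimate in this degenerate, transverse setting. The usual Aubin--Yau argument exploits a smooth ambient K\"ahler background, but here the natural background is only transverse K\"ahler and the K\"ahler cone is not positively curved, so one must carefully track and absorb the error terms produced by the bisectional curvature of the cone. Equivalently, one must ensure that basic regularity of $\Psi_\varepsilon$ persists along the continuity path, so that the transverse Laplacian used in the Yau estimate controls the full Laplacian appearing in the statement. Settling this is precisely the technical heart of \cite{guan2012regularity}, whose machinery the present proof ultimately invokes.
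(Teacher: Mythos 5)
The paper does not prove this lemma at all: it is quoted verbatim from Theorem~1 of \cite{guan2012regularity}, and the only added content is Remark~2.7, which observes that the $C^1$-norm and the Laplacian bound stated here in the variable $\tau$ follow from the corresponding bounds for the cone function $\psi(\cdot,r)$ of \cite{guan2012regularity} because $r$ ranges over $[1,\tfrac32]$ and so is bounded away from $0$. Your sketch is a faithful outline of the strategy actually used in that reference (elliptic regularization $(\cdot)^{n+1}=\varepsilon\,\Omega_0$, continuity method with barriers, uniform $C^0$, $C^1$ and Laplacian estimates in the style of Yau/B{\l}ocki/Chen, then $\varepsilon\to0$), and you correctly identify the uniform second-order estimate in the degenerate foliated setting as the technical heart, which you then defer back to \cite{guan2012regularity} --- so in substance your proposal reduces to the same citation the paper makes. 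The one place where your route genuinely differs from the source is the choice of background: you propose to run the Aubin--Yau computation on $M\times D$ against the merely transverse form $\omega_{\mathrm{ref}}=\pi^*d\eta+\tfrac{\im}{2}d\tau\wedge d\bar\tau$, whereas Guan--Zhang work on the K\"ahler cone $M\times[1,\tfrac32]$ with the honest K\"ahler form $\tilde\omega=\tfrac12 dd^c r^2$, precisely so that the classical second-order estimate applies without having to reinvent it transversally; the dictionary between the two formulations is the paper's Proposition~2.5. If you intend your version to be self-contained rather than an invocation of \cite{guan2012regularity}, the transverse Laplacian estimate against $\omega_{\mathrm{ref}}$ (and the persistence of basicness along the continuity path, which you flag) would have to be carried out in detail, and that is exactly the content you are borrowing.
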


\begin{zj}
  Indeed, we use the fact that the $C^1$-norm and $\triangle \Psi$ can be controlled by the function $\psi(\cdot,r)$ in \cite{guan2012regularity}, since $r$ is away from $0$.
\end{zj}

\section{Convexity of K-energy along weak geodesic}
\label{section3}
Since our original definition of the Mabuchi K-energy depends on the forth order derivative, we want to rewrite an explicit formula for it, which has an ``energy part" and ``entropy part" as in the K\"ahler case(see \cite{berman2014convexity} for the K\"ahler case). We begin with some notations. Given $\varphi\in \mathcal{H}$, we will write the energy as follow
$$
\mathcal{E}(\varphi)=\mathop{\sum}_{j=0}^{n} \int_M\varphi(d\eta_\varphi)^{n-j} \wedge(d\eta)^j \wedge \eta,
$$
and
$$
\mathcal{E}^T(\varphi)=\mathop{\sum}_{j=0}^{n-1} \int_M\varphi(d\eta_\varphi)^{n-j-1} \wedge(d\eta)^j\wedge T \wedge \eta,
$$
where $T$ is a basic form of $(1,1)$-type. It is easy to check that
\begin{equation}
\label{energy diff}
d\mathcal{E}|_\varphi=(n+1)(d\eta_\varphi)^n\wedge\eta,\text{ and }d\mathcal{E}^T|_\varphi =n(d\eta_\varphi)^{n-1}\wedge T\wedge\eta.
\end{equation}

\begin{zj}
  By writing $d\mathcal{E}|_\varphi=\Xi$, where $\Xi$ is a measure on $M$, we mean that
  \begin{equation*}
    \left. \frac{d\mathcal{E}(\varphi_t)}{dt} \right|_{t=0}= \int_M \dot{\varphi}_t|_{t=0} \Xi,
  \end{equation*}
  where $\{\varphi_t\}$ is a curve in $\mathcal{H}$ such that $\varphi_t|_{t=0}=\varphi$.
\end{zj}

Similarly, the second order variations of $\mathcal{E}$ and $\mathcal{E}^T$ are
\begin{eqnarray*}
d_\tau d^c_\tau\mathcal{E}(\varphi_t)= \int_M(\pi^*d\eta+\tilde{d} \tilde{d}^c\Psi)^{n+1}\wedge\eta, \\
d_\tau d^c_\tau\mathcal{E}^T (\varphi_t)=\int_M (\pi^*d\eta+\tilde{d} \tilde{d}^c\Psi)^n\wedge \pi^*T\wedge\eta.
\end{eqnarray*}
Finally, we consider the entropy of a measure $\mu$ relative to a reference measure $\mu_0$ is defined as follows, if $\mu$ is absolutely continuous with respect to $\mu_0$, then
\begin{equation*}
  H_{\mu_0}(\mu):=\int_M\log{\frac{\mu}{\mu_0}}\mu.
\end{equation*}
We have the following proposition for the entropy defined above(see \cite{berman2014convexity}).

\begin{mt}
\label{mtentropy}
  If $\mu_0$ and $\mu$ are probability measures on $M$ such that $\mu$ is absolutely continuous with respect to $\mu_0$, then
  \begin{equation*}
    H_{\mu_0}(\mu)=\sup_f(\int_Mf \mu-\log\int_Me^f\mu_0)
  \end{equation*}
  where the supermum is taken over all continuous functions on $M$. Furthermore, it is a convex function of the measure $\mu$ for the natural affine structure on the space of probability measures, and lower semicontinuous with respect to the weak *-topology.
\end{mt}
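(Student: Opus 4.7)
The identity to prove is the classical Gibbs variational principle (Donsker--Varadhan formula), and the plan is to obtain it in the standard way with Jensen's inequality, then read off convexity and lower semicontinuity for free.

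First I would establish the inequality $H_{\mu_0}(\mu)\ge \int_M f\,\mu-\log\int_M e^f\mu_0$ for every continuous $f$. Writing $\mu=\rho\,\mu_0$ with $\rho\ge 0$ and $\int\rho\,\mu_0=1$, set $Z=\int_M e^f\mu_0$ and introduce the probability measure $\nu=e^f\mu_0/Z$. Then $\mu$ is absolutely continuous with respect to $\nu$ with density $\rho Z e^{-f}$, so Gibbs' inequality (Jensen applied to $-\log$, equivalently non-negativity of the Kullback--Leibler divergence $H_\nu(\mu)$) gives
\begin{equation*}
0\le \int_M \log\!\bigl(\rho Z e^{-f}\bigr)\,\mu = H_{\mu_0}(\mu)+\log Z-\int_M f\,\mu,
\end{equation*}
which is exactly the inequality claimed. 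This step is clean and requires nothing more than Jensen.

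Next I would argue that the supremum is attained in the limit by the formal maximizer $f=\log\rho$. If $H_{\mu_0}(\mu)<\infty$, then $\log\rho\in L^1(\mu)$ and a routine truncation plus mollification produces continuous functions $f_k$ with $f_k\to\log\rho$ in $L^1(\mu)$ and with $e^{f_k}\le e^{\log\rho}\mathbf 1_{\{\rho\le N_k\}}+C_k$ controlled well enough that $\int e^{f_k}\mu_0\to 1$ by dominated convergence; then $\int f_k\,\mu-\log\int e^{f_k}\mu_0\to H_{\mu_0}(\mu)$. If $H_{\mu_0}(\mu)=\infty$, one replaces $\log\rho$ by $\min(\log\rho,k)$, applies the bounded case, and lets $k\to\infty$ to make the supremum diverge as well. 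The only genuine subtlety here is handling the region where $\rho$ is very small or very large, which is standard but warrants care.

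Finally, with the variational formula in hand, convexity and lower semicontinuity are immediate. Indeed, for each fixed continuous $f$ the map $\mu\mapsto \int_M f\,\mu-\log\int_M e^f\mu_0$ is affine in $\mu$ (the second term does not depend on $\mu$) and weak-$*$ continuous in $\mu$ (since $f$ is continuous and $M$ is compact). A pointwise supremum of affine functions is convex, and a pointwise supremum of continuous functions is lower semicontinuous, so $H_{\mu_0}$ inherits both properties on the space of probability measures. The main obstacle is really just the approximation step for equality in the supremum; everything else is formal.
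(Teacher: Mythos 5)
The paper does not actually prove this proposition; it simply quotes it from Berman--Berndtsson \cite{berman2014convexity}, so there is no in-paper argument to compare against. Your proposal is the standard Donsker--Varadhan/Gibbs variational argument, and it is essentially correct. The inequality ``$\geq$'' via $\nu=e^f\mu_0/Z$ and nonnegativity of $H_\nu(\mu)$ is clean (note $\mu\ll\nu$ holds automatically since $e^f>0$ on compact $M$, so the Jensen step is legitimate). For the reverse inequality, the right truncation is the two-sided one $f_k=\max(\min(\log\rho,k),-k)$, which gives the pointwise bound $e^{f_k}\leq\rho+e^{-k}$ and hence $\log\int_Me^{f_k}\mu_0\leq\log(1+e^{-k})\to 0$, while $\int_Mf_k\,\mu\to\int_M\log\rho\,\mu$ by monotone convergence on the positive and negative parts; your written bound $e^{f_k}\leq e^{\log\rho}\mathbf 1_{\{\rho\leq N_k\}}+C_k$ would not suffice unless $C_k\to0$, so that line should be tightened. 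Passing from bounded measurable to continuous test functions is then a Lusin-theorem/mollification step with respect to $\mu+\mu_0$, as you indicate. Two small points worth making explicit: (i) for the lower semicontinuity statement on the whole space of probability measures one sets $H_{\mu_0}(\mu)=+\infty$ when $\mu\not\ll\mu_0$, and the variational formula still returns $+\infty$ there, so the ``sup of affine, weak-$*$ continuous functionals'' argument applies globally; (ii) the case $H_{\mu_0}(\mu)=+\infty$ is handled exactly as you say. With these routine repairs the proof is complete and is the same argument the cited reference relies on.
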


We can give the explicit formula of Mabuchi K-energy in terms of the energy and entropy defined above.

\begin{mt}
\label{mt3.2}
  With the notation $\mu_0=(d\eta)^n\wedge\eta$, the following formula holds for the Mabuchi K-energy on $\mathcal{H}$:
  \begin{equation*}
    \mathcal{M}(\varphi)=\frac{\bar{S}^T}{n+1}\mathcal{E}(\varphi) -2\mathcal{E}^{\rho^T_{d\eta}}(\varphi) +2H_{\mu_0}((d\eta_\varphi)^n\wedge\eta).
  \end{equation*}
\end{mt}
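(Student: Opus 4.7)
The plan is to verify the identity by checking that both sides vanish at $\varphi=0$ and share the same variation along any smooth path $\{\varphi_t\}\subset\mathcal{H}$. The values at the base point are immediate: $\mathcal{M}(0)=0$, $\mathcal{E}(0)=\mathcal{E}^{\rho^T_{d\eta}}(0)=0$, and $H_{\mu_0}(\mu_0)=0$. Path-independence of $\mathcal{M}$ is the Mabuchi lemma above, which also yields
$$\frac{d}{dt}\mathcal{M}(\varphi_t)=-\int_M\dot\varphi_t(S^T_{\varphi_t}-\bar S^T)(d\eta_{\varphi_t})^n\wedge\eta,$$
so it suffices to reproduce this expression by differentiating the right-hand side of the claimed formula.

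For the first two summands, the variation identities for $\mathcal{E}$ and $\mathcal{E}^T$ recalled above give immediately
$$\frac{d}{dt}\Big[\frac{\bar S^T}{n+1}\mathcal{E}(\varphi_t)-2\mathcal{E}^{\rho^T_{d\eta}}(\varphi_t)\Big]=\int_M\dot\varphi_t\Big(\bar S^T(d\eta_{\varphi_t})^n-2n\,\rho^T_{d\eta}\wedge(d\eta_{\varphi_t})^{n-1}\Big)\wedge\eta.$$
For the entropy, write $\mu_t=f_t\mu_0$ with $f_t=(d\eta_{\varphi_t})^n/(d\eta)^n$. Since $[d\eta_{\varphi_t}]_B=[d\eta]_B$, the total mass $\int_M\mu_t$ is independent of $t$, so
$$\frac{d}{dt}H_{\mu_0}(\mu_t)=\int_M(\log f_t)\dot\mu_t,\qquad \dot\mu_t=n(d\eta_{\varphi_t})^{n-1}\wedge d_Bd_B^c\dot\varphi_t\wedge\eta.$$
The basic double integration by parts discussed below transfers $d_Bd_B^c$ onto $\log f_t$, and the transverse analogue of the identity $\rho=-\sqrt{-1}\partial\bar\partial\log\det g$ gives $d_Bd_B^c\log f_t=\rho^T_{d\eta}-\rho^T_{d\eta_{\varphi_t}}$ in any foliation chart compatible with the transverse holomorphic structure. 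Summing the three derivatives, using the normalization $2n\,\rho^T_{d\eta_{\varphi_t}}\wedge(d\eta_{\varphi_t})^{n-1}=S^T_{\varphi_t}(d\eta_{\varphi_t})^n$ implicit in the definition of $\bar S^T$, the $\rho^T_{d\eta}$ contributions cancel and one obtains exactly $\int_M\dot\varphi_t(\bar S^T-S^T_{\varphi_t})(d\eta_{\varphi_t})^n\wedge\eta=\frac{d}{dt}\mathcal{M}(\varphi_t)$.

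The main step that requires a word of justification is the basic double integration by parts: for a smooth basic function $u$ and a $d_B$-closed basic $(n-1,n-1)$-form $\Theta$,
$$\int_M u\,d_Bd_B^c v\wedge\Theta\wedge\eta=\int_M v\,d_Bd_B^c u\wedge\Theta\wedge\eta.$$
This is obtained by expanding $d(u\,d_B^c v\wedge\Theta\wedge\eta)$, integrating, and noting that the would-be boundary term $u\,d_B^c v\wedge\Theta\wedge d\eta$ is a basic form of degree $2n+1$ on a manifold whose transverse dimension is $2n$, hence identically zero; a symmetric expansion of $d(v\,d_B^c u\wedge\Theta\wedge\eta)$ completes the argument. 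In our application $u=\log f_t$ is smooth and basic throughout the path since $(d\eta_{\varphi_t})^n$ is a non-vanishing basic top form at every $t$, so no regularity obstruction arises.
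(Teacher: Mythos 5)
Your proposal is correct and follows exactly the route the paper indicates: the paper's own justification is a one-line remark saying the proposition is proved by checking that both sides vanish at $\varphi=0$ and have the same first derivative along a path, which is precisely what you carry out (with the variation formulas \eqref{energy diff}, the entropy differentiation, the basic integration by parts, and the transverse Ricci identity $d_Bd_B^c\log f_t=\rho^T_{d\eta}-\rho^T_{d\eta_{\varphi_t}}$ all checking out). You have simply supplied the details the paper omits.
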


\begin{zj}
  This proposition can be proved by checking the first derivative of both side along a fixed path. And they are both equal to $0$, while $\varphi=0$. Furthermore, the new formula of Mabuchi K-energy is well-defined for any basic function with weak $C^2$-regularity, i.e. functions $\varphi$ such that $d\eta+d_Bd^c_B\varphi$ has $L^\infty$-coefficients.
\end{zj}

Now we consider the convexity of the Mabuchi K-energy along the weak geodesics, modifying the method of \cite{berman2014convexity}.

\begin{thm}
\label{weakconvexThm}
  Let $\varphi_t(\cdot)=\varphi_\tau(\cdot)=\Psi(\cdot,\tau)$ be a family of basic functions, such that $d\eta+d_Bd_B^c\varphi_\tau$ has $L^\infty$-coefficients, $\pi^*d\eta+\tilde{d} \tilde{d}^c\Psi\geq 0$, and $(\pi^*d\eta+\tilde{d} \tilde{d}^c\Psi)^{n+1}\wedge\eta=0$ on $M\times D$. Then the Mabuchi K-energy $\mathcal{M}(\varphi_\tau)$ is weakly subharmonic with respect to $\tau\in D$. In particular, $\mathcal{M}(\varphi_t)$ is weakly convex along the weak geodesic $\varphi_t$ connecting two given points in $\mathcal{H}$.
\end{thm}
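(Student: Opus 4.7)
The plan follows Berman--Berndtsson \cite{berman2014convexity}, replacing the K\"ahler calculus by its transverse Sasakian analogue. Using Proposition \ref{mt3.2}, write
\[
\mathcal{M}(\varphi_\tau) = \frac{\bar S^T}{n+1}\mathcal{E}(\varphi_\tau) - 2\mathcal{E}^{\rho^T_{d\eta}}(\varphi_\tau) + 2H_{\mu_0}(\mu_\tau),\qquad \mu_\tau := (d\eta_{\varphi_\tau})^n\wedge\eta,
\]
and compute $d_\tau d_\tau^c$ of each summand on $D$. By the second-variation formulas stated just before Proposition \ref{mtentropy} together with the weak geodesic equation $(\pi^*d\eta+\tilde{d}\tilde{d}^c\Psi)^{n+1}\wedge\eta = 0$, the first summand vanishes and the second contributes the fibre integral $-2(\pi_D)_*[(\pi^*d\eta+\tilde d\tilde d^c\Psi)^n \wedge \pi^*\rho^T_{d\eta}\wedge\eta]$, which has indeterminate sign and must be absorbed by the entropy piece.

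For the entropy contribution I plan to use the transverse Ricci identity $\rho^T_{\varphi_\tau} = \rho^T_{d\eta} - \tfrac12 d_B d_B^c \log(\mu_\tau/\mu_0)$, legitimized on the basic $(1,1)$-level by the basic $\partial_B\bar\partial_B$-lemma of Section \ref{section2}, to recombine
\[
2H_{\mu_0}(\mu_\tau) - 2\mathcal{E}^{\rho^T_{d\eta}}(\varphi_\tau)
\]
so that its second $d_\tau d_\tau^c$-variation takes the form
\[
2(\pi_D)_*\bigl[(\pi^*d\eta+\tilde d\tilde d^c\Psi)^n\wedge \pi^*\rho^T_{\varphi_\tau}\wedge\eta\bigr]
\]
modulo an affine remainder in $\tau$ coming from the fixed basic class $[\rho^T_{d\eta}]_B$. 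The non-negativity of this fibre push-forward on $D$ is the Sasakian/transverse analogue of Berndtsson's positivity of direct images applied to the projection $\pi: M\times D\to D$ equipped with the positive current $\pi^*d\eta+\tilde d\tilde d^c\Psi$; in the smooth regime it reduces, fibrewise on the leaf space and by $\xi$-invariance, to Berndtsson's original Hodge-theoretic pointwise inequality, and it exactly cancels the negative contribution from the energy piece.

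The main obstacle is making this rigorous in the weak $C^{1,1}$ regime of Lemma \ref{geodesic}, where neither $\rho^T_{\varphi_\tau}$ nor $\log(\mu_\tau/\mu_0)$ is classically defined. My plan is to regularize: solve the perturbed equation $(\pi^*d\eta+\tilde d\tilde d^c\Psi_\varepsilon)^{n+1}\wedge\eta = \varepsilon\,\tilde\omega^{n+1}\wedge\eta$ to obtain smooth basic approximations $\Psi_\varepsilon\to\Psi$ with uniform $C^{1,1}$ bounds supplied by the methods of \cite{guan2012regularity}; verify the subharmonicity of $\mathcal{M}(\varphi_\tau^\varepsilon)$ in the smooth regime by the argument sketched above; and then send $\varepsilon\to 0$ using the lower semicontinuity of the entropy from Proposition \ref{mtentropy} together with weak$^*$ continuity of $(\pi^*d\eta+\tilde d\tilde d^c\Psi_\varepsilon)^n$, the latter being a transverse Bedford--Taylor statement for $L^\infty$-bounded basic $(1,1)$-forms on $M\times D$. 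Radial subharmonicity on the annulus $D=\{a\le\log|\tau|\le b\}$ then yields the claimed weak convexity of $\mathcal{M}(\varphi_t)$ along the weak geodesic.
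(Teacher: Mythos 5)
Your decomposition of $\mathcal{M}$ via Proposition \ref{mt3.2} and your identification of Berndtsson's positivity as the ultimate source of convexity match the paper. But the route you propose for the hard step is the one the paper (following Berman--Berndtsson) deliberately avoids, and it has a genuine gap. First, regularizing the geodesic by solving $(\pi^*d\eta+\tilde d\tilde d^c\Psi_\varepsilon)^{n+1}\wedge\eta=\varepsilon\,\tilde\omega^{n+1}\wedge\eta$ and ``verifying subharmonicity in the smooth regime'' does not work: the positivity of the second $d_\tau d_\tau^c$-variation of the entropy term is \emph{not} a pointwise fact about smooth metrics. Its only known proof runs through the plurisubharmonic variation of Bergman kernels, which yields $\tilde d\tilde d^c\log\beta_{j,k}\geq -k(\pi^*d\eta+\tilde d\tilde d^c\Psi)$; after wedging with $(\pi^*d\eta+\tilde d\tilde d^c\Psi)^n\wedge\eta$ one needs the \emph{exact} degenerate equation $(\pi^*d\eta+\tilde d\tilde d^c\Psi)^{n+1}\wedge\eta=0$ to kill the right-hand side. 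Along your $\varepsilon$-approximations the same estimate gives only $\geq -k\varepsilon\cdot(\mathrm{vol})$, which is useless as $k\to\infty$ for fixed $\varepsilon$; and the direct (non-Bergman) second-variation formula leaves an error term of the form $\int(\ddot\varphi-\tfrac14|d_B\dot\varphi|^2)(S^T_\varphi-\bar S^T)$ in which the scalar curvature of $\varphi_\varepsilon$ is not uniformly controlled. So the order of limits in your plan is wrong. Second, your claimed form of the entropy's second variation, $2(\pi_D)_*[(\pi^*d\eta+\tilde d\tilde d^c\Psi)^n\wedge\pi^*\rho^T_{\varphi_\tau}\wedge\eta]$, is not correct and in any case has no sign: $\rho^T_{\varphi_\tau}$ is only the fibrewise $d_Bd_B^c$-part of $-\tfrac12\tilde d\tilde d^c\log(\mu_\tau/\mu_0)$, and the positivity lives precisely in the mixed and $\tau\tau$ components that your expression discards. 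You also do not address that $\log$ of the Monge--Amp\`ere density is merely $L^\infty$ from above and unbounded below, so $\tilde d\tilde d^c$ of it is not a priori a well-defined current against $(\pi^*d\eta+\tilde d\tilde d^c\Psi)^n$.

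What the paper actually does is work directly on the weak geodesic, where the degenerate equation holds exactly. It first shows $d_\tau d_\tau^c\mathcal{M}^u(\tau)=\int_M\tilde d\tilde d^c u\wedge(\pi^*d\eta+\tilde d\tilde d^c\Psi)^n\wedge\eta$ for locally bounded $u$, then truncates the entropy density by $\Psi_A=\max\{\log(\cdot),\chi-A\}$ with $\chi=\pi^*\chi_0-\Psi$ chosen so that $\tilde d\tilde d^c\chi\geq -(\pi^*d\eta+\tilde d\tilde d^c\Psi)$, localizes with a partition of unity and local transverse potentials $d\eta=d_Bd_B^c\rho_j$ (this step is unavoidable, since $d\eta$ is not globally $d_Bd_B^c$-exact and the Bergman construction is local on the leaf space), and replaces the density by the Bergman measures $\beta_{j,k}$ with weight $k(\rho_j\circ\pi+\varphi_\tau)$. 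Berndtsson's theorem plus the maximum property $dd^c\max\{u,v\}\geq -k(\pi^*d\eta+\tilde d\tilde d^c\Psi)$ for both branches, the geodesic equation, and dominated convergence in $k$ then give $T_A\geq 0$ for each $A$. If you want to salvage your write-up, replace the $\varepsilon$-regularization of the geodesic by this truncation-plus-Bergman scheme; the rest of your outline (the energy terms and the final passage from weak subharmonicity in $\tau$ to convexity in $t=\log|\tau|$) is fine.
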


\begin{proof}
  Similar to \cite{berman2014convexity}, we can prove that
  \begin{equation*}
    d_\tau d_\tau^c\mathcal{M}^{u}(\tau)=\int_M\tilde{d}\tilde{d}^c u \wedge(\pi^*d\eta+ \tilde{d}\tilde{d}^c \Psi)^n\wedge \eta,
  \end{equation*}
  where $u$ is a locally bounded function on $M\times D$ and
  \begin{equation*}
    \mathcal{M}^u(\tau):=\frac{\bar{S}^T}{n+1}\mathcal{E}(\varphi_\tau) -2\mathcal{E}^{\rho^T_{d\eta}}(\varphi_\tau) + 2\int_M u(\cdot,\tau)(d\eta_{\varphi_\tau})^n\wedge\eta.
  \end{equation*}

  We want to apply the above considerations to $u=\log{\frac{((d \eta +d_B d_B^c\Psi)^n \wedge \eta)}{(d\eta)^n \wedge\eta}}$, but this functions is not locally bounded. We will introduce a truncation in the following way as in \cite{berman2014convexity}. For a fixed positive number $A$, we define
  \begin{equation*}
    \Psi_A:=\max\{\log{\frac{(d\eta+d_Bd_B^c\Psi)^n}{(d\eta)^n\wedge \eta}}, \chi-A\}
  \end{equation*}
  where $\chi$ denotes to be $\chi=\pi^*\chi_0-\Psi$ for $\chi_0\in \mathcal{H}$, such that $d\eta_{\chi_0}=d\eta+d_Bd_B^c\chi_0\geq 0$. So we have that
  \begin{equation*}
    \tilde{d}\tilde{d}^c\chi=\pi^*d_Bd_B^c\chi_0-(\pi^*d\eta+\tilde{d} \tilde{d}^c\Psi)+\pi^*d\eta \geq -(\pi^*d\eta+\tilde{d} \tilde{d}^c\Psi).
  \end{equation*}

  We claim that for each $A>0$, $T_A=\int_M\tilde{d}\tilde{d}^c\Psi_A\wedge(\pi^*d\eta+ \tilde{d}\tilde{d}^c\Psi)^n\wedge\eta$ is defined by a nonnegative measure on $D$, i.e. for any nonnegative compact supported smooth function $\varrho$,
  \begin{equation}
     T_A(\varrho)=\int_D\varrho T_A\geq 0.
   \end{equation}
   In order to prove this claim, we will consider the localization of Sasakian manifold.

  In \cite{godlinski2000locally}, it has been proved that every Sasakian metric can be locally generated by a real function of $2n$ variables, i.e. the Sasakian analog of the K\"ahler potential for the K\"ahler geometry. This local property has also been applied in \cite{zhang2010some} to prove the existence of $\alpha$-invariant in Sasakian case. Let $\zeta_j$ be a partition of unity subordinate to a covering of coordinate patches such that the supported set of $\zeta_j$ is a subset of $\Omega_{p_j}^{-1}([-\frac{R}{2},\frac{R}{2}]\times B_o(\frac{R}{2}))$ for a local coordinate chart $(U_{p_j},\Omega_{p_j})$ such that $\Omega_{p_j}(p_j)=0$, and for convenience we write
  \begin{equation*}
  U_{p_j}=\Omega_{p_j}^{-1}([-\frac{R}{2},\frac{R}{2}]\times B_o(\frac{R}{2})).
  \end{equation*}
  Furthermore, we choose $R$ is sufficiently small such that we can write
  \begin{equation}
    d\eta=d_Bd_B^c \rho_j
  \end{equation}
  on $U_{p_j}$, for some real-valued basic function $\rho_j$ on $M$. The compactness of $M$ implies that we can choose such $R>0$ for all $p\in M$. For such partition of unity, we write $T_A=\sum_jT_A^j$, where
  \begin{equation}
  \begin{split}
  T_A^j&=\int_M\zeta_j\tilde{d}\tilde{d}^c\Psi_A\wedge(\pi^*d\eta+ \tilde{d}\tilde{d}^c\Psi)^n\wedge\eta\\
  &=\int_{U_{p_j}}\zeta_j\tilde{d}\tilde{d}^c\Psi_A\wedge(\pi^*d\eta+ \tilde{d}\tilde{d}^c\Psi)^n\wedge\eta.
  \end{split}
  \end{equation}

  Since $\rho_j\circ\pi(\tau,)+\varphi_\tau$ defines a plurisubharmonic(psh) function on $B_o(\frac{R}{2})\subset \mathbb{C}^m$, we know that we can approximate the measure $(d\eta+d_Bd^c_B\Psi)^n$ by the Bergman measure $\beta_{j,k}=\beta_{k(\rho_j\circ\pi(\tau,) +\varphi_\tau)}$ for the Hilbert space of all holomorphic functions on $B_o(\frac{R}{2})$ with the weight $k(\rho_j\circ\pi(\tau,) +\varphi_\tau)$, according to \cite{berman2014convexity}. More precisely, we consider the following measure on $D$,
  \begin{equation}
    T_{A,k}^j=\int_{U_{p_j}}\zeta_j\tilde{d}\tilde{d}^c\Psi_{A,k}\wedge(\pi^*d\eta+ \tilde{d}\tilde{d}^c\Psi)^n\wedge\eta,
  \end{equation}
  where $\Psi_{A,k}=\max\{\log{\beta_{j,k}}, \chi-A\}$. By the results on plurisubharmonic variation of Bergman kernels in \cite{berndtsson2006subharmonicity}, we know that
  \begin{equation}
    \tilde{d}\tilde{d}^c\log\beta_{j,k}\geq -k\tilde{d}\tilde{d}^c(\rho\circ\pi+\Psi) = -k (\pi^*d\eta+\tilde{d}\tilde{d}^c\Psi),
  \end{equation}
  on $B_o(\frac{R}{2})\times D$. So for $k\geq 1$, we have
  \begin{equation}
    \begin{split}
      T_{A,k}^j&=\int_{U_{p_j}}\zeta_j\tilde{d}\tilde{d}^c\Psi_{A,k}\wedge(\pi^*d\eta+ \tilde{d}\tilde{d}^c\Psi)^n\wedge\eta\\
      &=\int_{\Omega_{p_j}^{-1}([-\frac{R}{2},\frac{R}{2}]\times B_o(\frac{R}{2}))}\zeta_j\tilde{d}\tilde{d}^c\Psi_{A,k}\wedge(\pi^*d\eta+ \tilde{d}\tilde{d}^c\Psi)^n\wedge\eta\\
      &=\int_{[-\frac{R}{2},\frac{R}{2}]\times B_o(\frac{R}{2})}(\zeta_j\circ\Omega_{p_j}^{-1}) \Omega_{p_j}^{-1*}(\tilde{d}\tilde{d}^c\Psi_{A,k}\wedge(\pi^*d\eta+ \tilde{d}\tilde{d}^c\Psi)^n\wedge\eta)\\
      &=\int_{-\frac{R}{2}}^{\frac{R}{2}}\int_{B_o(\frac{R}{2})}(\zeta_j \circ \Omega_{p_j}^{-1}) \Omega_{p_j}^{-1*}(\tilde{d}\tilde{d}^c\Psi_{A,k} \wedge(\pi^*d\eta+ \tilde{d}\tilde{d}^c\Psi)^n)\wedge dx\\
      &\geq-\int_{-\frac{R}{2}}^{\frac{R}{2}} \int_{B_o(\frac{R}{2})}k(\zeta_j\circ\Omega_{p_j}^{-1}) \Omega_{p_j}^{-1*}((\pi^*d\eta+ \tilde{d}\tilde{d}^c\Psi)^{n+1})\wedge dx\\
      &=-k\int_{U_{p_j}}\zeta_j(\pi^*d\eta+ \tilde{d} \tilde{d}^c\Psi)^{n+1} \wedge \eta\\
      &=0,
    \end{split}
  \end{equation}
  where we use the geodesic equation in the last equality and the property $$dd^c\max\{u,v\}\geq \max\{dd^cu,dd^cv\}$$ as a current for two psh functions $u$ and $v$ in the inequality.

  Hence, invoking the dominated convergence theorem gives the following local weak convergence
  \begin{equation}
    \lim_{k\to \infty} T_{A,k}^j=T_A^j.
  \end{equation}
  In particular, $T_A^j\geq 0$, so is $T_A$ which concludes the proof of the theorem.
\end{proof}

Similar to the argument in \cite{berman2014convexity}, we can also prove that $\mathcal{M}(\varphi)$ is continuous along the weak geodesics, and hence is convex.

\begin{thm}
\label{convex}
  $\mathcal{M}$ is continuous and convex along the weak geodesics given in Lemma \ref{geodesic}.
\end{thm}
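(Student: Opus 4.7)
The plan is to combine the weak subharmonicity of $\mathcal{M}(\varphi_\tau)$ already established in Theorem \ref{weakconvexThm} with the decomposition of the Mabuchi K-energy in Proposition \ref{mt3.2}, and then close the argument by a lower-semicontinuity argument sharpened by the convexity. First I would invoke Proposition \ref{mt3.2} to write
\begin{equation*}
\mathcal{M}(\varphi_t)=\frac{\bar{S}^T}{n+1}\mathcal{E}(\varphi_t)-2\mathcal{E}^{\rho^T_{d\eta}}(\varphi_t)+2H_{\mu_0}\bigl((d\eta_{\varphi_t})^n\wedge\eta\bigr).
\end{equation*}
The two energy terms $\mathcal{E}(\varphi_t)$ and $\mathcal{E}^{\rho^T_{d\eta}}(\varphi_t)$ are continuous in $t$ along the weak geodesic: the uniform estimate $\|\Psi\|_{C^1(M\times D)}+\sup_{M\times D}|\triangle\Psi|\leq C$ from Lemma \ref{geodesic} implies that $\varphi_t\to\varphi_{t_0}$ uniformly on $M$ as $t\to t_0$, while the transverse mixed Monge-Amp\`ere masses $(d\eta_{\varphi_t})^{n-j}\wedge(d\eta)^j\wedge\eta$ converge in the weak $*$-topology by the Sasakian analog of the Bedford-Taylor continuity theorem for bounded potentials, which can be proved locally using the basic local potentials $\rho_j$ introduced in the proof of Theorem \ref{weakconvexThm}.

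Next, I would handle the entropy term via Proposition \ref{mtentropy}, which expresses $H_{\mu_0}(\mu)$ as a supremum of continuous affine functionals on the space of probability measures and hence as a lower semicontinuous functional in the weak $*$-topology. Combined with the weak $*$-convergence of $(d\eta_{\varphi_t})^n\wedge\eta$ established in the previous step, this makes $H_{\mu_0}((d\eta_{\varphi_t})^n\wedge\eta)$ lower semicontinuous in $t$, and consequently $\mathcal{M}(\varphi_t)$ is lower semicontinuous on $[a,b]$.

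With these two ingredients I would combine them with the distributional convexity. Theorem \ref{weakconvexThm} asserts $d_\tau d_\tau^c\mathcal{M}(\varphi_\tau)\geq 0$ as currents on the annulus $D$; since $\Psi$ depends on $\tau\in D$ only through $t=\log|\tau|$, this is equivalent to $\partial_t^2\mathcal{M}(\varphi_t)\geq 0$ in the distributional sense on $(a,b)$. A standard fact from potential theory says that a lower semicontinuous function with nonnegative distributional second derivative is convex in the classical pointwise sense; hence $\mathcal{M}(\varphi_t)$ is convex on $[a,b]$ and automatically continuous on $(a,b)$. Continuity at the endpoints follows because convexity gives the affine upper bound $\mathcal{M}(\varphi_t)\leq\tfrac{b-t}{b-a}\mathcal{M}(\varphi_a)+\tfrac{t-a}{b-a}\mathcal{M}(\varphi_b)$, so $\limsup_{t\to a}\mathcal{M}(\varphi_t)\leq\mathcal{M}(\varphi_a)$, while lower semicontinuity supplies the reverse inequality, and symmetrically at $t=b$.

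The main obstacle I anticipate is the continuity of the energy terms under only the weak $C^2$ regularity of $\Psi$, that is, establishing a transverse Bedford-Taylor continuity theorem for mixed transverse Monge-Amp\`ere masses of basic potentials with bounded coefficients. This can be carried out on a partition of unity as in the proof of Theorem \ref{weakconvexThm}, transferring the question to the classical Bedford-Taylor theory via the transverse charts $\Omega_{p_j}$. Once this continuity is in place, the lower semicontinuity combined with distributional convexity closes both the convexity and continuity of $\mathcal{M}$ along the weak geodesic.
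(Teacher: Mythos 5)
Your reduction of continuity to ``energy terms continuous + entropy lower semicontinuous + convexity'' matches the paper's final paragraph, and your endpoint argument (affine upper bound from convexity, lsc for the reverse inequality) is fine \emph{once convexity is in hand}. The gap is in how you obtain convexity. The ``standard fact'' you invoke --- that a lower semicontinuous function with nonnegative distributional second derivative is convex --- is false. A distributional inequality $\partial_t^2 f\geq 0$ only says that $f$ agrees \emph{almost everywhere} with a convex (hence continuous) function $g$; lower semicontinuity then gives $f\leq g$ everywhere (approximate any $t_0$ by points where $f=g$), but it does not exclude $f$ dipping strictly below $g$ on a Lebesgue-null set. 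Concretely, $f\equiv 0$ except $f(t_0)=-1$ is lower semicontinuous, has vanishing distributional second derivative, and is not convex. Since the entropy term is only lower semicontinuous under weak-$*$ convergence (uniform $L^\infty$ bounds on the densities do not help: oscillating densities converge weakly while their entropies do not), you have no upper semicontinuity to pair with it, so Theorem \ref{weakconvexThm} plus semicontinuity cannot close the argument.

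This is exactly the subtlety that forces the paper (following Berman--Berndtsson) to route the proof through the truncated, Bergman-regularized functionals $\mathcal{M}_{\varepsilon,j}^{\Psi_{A,k}}$: because the local Bergman kernels depend \emph{continuously} on $\tau$ and the truncation $\kappa_\varepsilon$ keeps everything bounded, each $\mathcal{M}_{\varepsilon,j}^{\Psi_{A,k}}+C_0t^2$ is a genuinely continuous function of $\tau$ with nonnegative distributional Laplacian, hence honestly convex; convexity then survives the successive limits $k\to\infty$, $\varepsilon\to 0$, summation over $j$, and $A\to\infty$. To repair your proposal you would need either to reproduce this approximation scheme or to prove directly that $t\mapsto \mathcal{M}(\varphi_t)$ coincides at every point with its a.e.\ convex representative, which is not supplied by the ingredients you list.
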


\begin{proof}
  Let $\{\zeta_j\}$ be the partition of unity as in the proof of Theorem \ref{weakconvexThm}, and $\kappa_\varepsilon(s)$ be a sequence of strictly convex functions with $\kappa'_\varepsilon \geq 1$ on $\{s|s\leq C\}$ tending to $s$ as $\varepsilon \to 0$, where $C$ is the upper bound of $f_{A,k}(\tau)=\log\Psi_{A,k}\frac{(d\eta_{\varphi_\tau})^n\wedge \eta}{(d \eta )^n \wedge \eta}$, for $\Psi_{A,k}$ and $\Psi$ in Theorem \ref{weakconvexThm}. In particular, the functions $\kappa_\varepsilon (s)$ can be defined by
  \begin{equation}
    \kappa_\varepsilon(s)=-\sqrt{(s-C-2\sqrt{\varepsilon})^2-\varepsilon}+C.
  \end{equation}

  With the notations in the proof of Theorem \ref{weakconvexThm}, we denote
  \begin{eqnarray}
     H^j_{\varepsilon, A,k}(\tau)&=&\int_M\zeta_j \kappa_\varepsilon (f_{A,k}(\tau)) (d\eta)^n\wedge \eta ,\\
     \mathcal{E}_j(\varphi)&=&\sum\limits_{i=0}^{n} \int_M\zeta_j\varphi(d\eta_\varphi)^{n-i} \wedge(d\eta)^i \wedge \eta,\\
     \mathcal{E}_j^T(\varphi)&=&\sum\limits_{i=0}^{n-1} \int_M\zeta_j\varphi(d\eta_\varphi)^{n-i-1} \wedge(d\eta)^i\wedge T \wedge \eta.
  \end{eqnarray}
  Furthermore, let $\mathcal{M}_{\varepsilon}^{\Psi_{A,k}} = \mathop{\sum}\limits_j \mathcal{M}_{\varepsilon,j}^{\Psi_{A,k}}$, where
  \begin{equation}
  \mathcal{M}_{\varepsilon,j}^{\Psi_{A,k}} =\frac{\bar{S}^T}{n+1} \mathcal{E}_j- 2\mathcal{E}^{\rho_{d\eta}^T}_j+ 2 H^j_{\varepsilon,A,k}.
  \end{equation}
  Similar to the proof of Theorem \ref{weakconvexThm}, we consider $d_\tau d_\tau^c\mathcal{M}_{\varepsilon,j}^{\Psi_{A,k}}$. Indeed, with the notation $T_{\varepsilon,A,k}^j= \int_M \zeta_j\kappa'_\varepsilon \tilde{d}\tilde{d}^c \Psi_{A,k} \wedge (\pi^*d \eta+ \tilde{d} \tilde{d}^c\Psi)^n \wedge \eta \geq 0$, we have that
  \begin{equation}
  \begin{split}
    d_\tau d_\tau^c \mathcal{M}_{\varepsilon,j}^{\Psi_{A,k}}=&-2\int_M \zeta_j(1-\kappa'_\varepsilon)(\pi^*d\eta+\tilde{d} \tilde{d}^c\Psi)^n\wedge \pi^*\rho_{d\eta}^T\wedge\eta\\
    \mathrel{\phantom{=}} &+2\int_M \zeta_j \kappa''_\varepsilon d_\tau f\wedge d_\tau^c f\wedge(d\eta)^n\wedge \eta+T_{\varepsilon,A,k}^j\\
    \geq &-C_0
  \end{split}
  \end{equation}
  where we use the convexity of $\kappa_\varepsilon$ and the fact that  $\kappa'_\varepsilon \in [1 ,\frac{2}{\sqrt{3}}]$.We know that $\mathcal{M}_{\varepsilon,j}^{\Psi_{A,k}}+ C_0t^2$ is convex, since it is weak convex and the local Bergman kernels depend continuously on $\tau$. Let $k$ tend to $\infty$, we know that $\mathcal{M}_{\varepsilon,j}^{\Psi_A}+C_0t^2$ is convex, where $\mathcal{M}_{\varepsilon,j}^{\Psi_A}$ is the functional replacing $\Psi_{A,k}$ by $\Psi_{A}$ in $\mathcal{M}_{\varepsilon,j}^{\Psi_{A,k}}$.

  If we sum over $j$, we know that $\mathcal{M}_{\varepsilon}^{\Psi_A}+\tilde{C}t^2=\mathop{\sum}\limits_{j} \mathcal{M}_{\varepsilon,j}^{\Psi_A}+\tilde{C}t^2$ is also convex. We conclude that $\mathcal{M}^{\Psi_A}+\tilde{C}t^2$ is also convex by $\varepsilon \to 0$. So $\mathcal{M}^{\Psi_A}$ is continuous on $(0,1)$ and upper semi-continuous on $[0,1]$. In particular, $\mathcal{M}^{\Psi_A}$ is convex, since we have known that it is weakly convex. Let $A\to \infty$, we know that $\mathcal{M}$ is convex, which means that $\mathcal{M}$ is continuous on $(0,1)$ and upper semi-continuous on $[0,1]$.

  In order to complete the proof of this theorem, we just prove that $\mathcal{M}$ is continuous. Indeed, $(d\eta_{\varphi_t})^n\wedge\eta$ is continuous in the weak $*$-topology, if $\varphi_t$ is the geodesic with weak $C^2$ regularity. Combining the continuity of $\mathcal{E}$ and $\mathcal{E}^T$ and the entropy part is lower semi-continuous in the weak $*$-topology of measure(see Proposition \ref{mtentropy}), we know that $\mathcal{M}$ is also continuous.\qedhere

\end{proof}

\begin{yl}
\label{diff mabuchi increasing}
  Given $u_0,u_1$ in $\mathcal{H}$, let $u_t$ be the corresponding weak geodesic. Then
  \begin{equation}
    \lim\limits_{t\to 0^+}\frac{\mathcal{M}(u_t)- \mathcal{M}(u_0)}{t}\geq \int_M \frac{du_t}{dt}_{t=0^+}(\bar{S}^T-S^T_{u_0})(d\eta_{u_0})^n\wedge\eta.
  \end{equation}
\end{yl}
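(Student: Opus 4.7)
The plan is to follow the K\"ahler prototype of Berman-Berndtsson: decompose $\mathcal{M}$ via Proposition \ref{mt3.2} as $\mathcal{M}=\mathcal{M}_0+2H$ with $\mathcal{M}_0(\varphi):=\frac{\bar{S}^T}{n+1}\mathcal{E}(\varphi)-2\mathcal{E}^{\rho^T_{d\eta}}(\varphi)$ and $H(\varphi):=H_{\mu_0}\!\bigl((d\eta_\varphi)^n\wedge\eta\bigr)$, and to handle the two pieces separately. Since $\mathcal{M}_0$ is first order, its right-derivative along the weak geodesic is computable \emph{exactly} by linear interpolation together with the formulas \eqref{energy diff}. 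The entropy $H$ is only lower semicontinuous, so its right-derivative is not \textit{a priori} identifiable; however the Legendre-dual characterization in Proposition \ref{mtentropy}, evaluated at the test function equal to the log-density at $u_0$, produces a sharp one-sided bound. Combined with the pointwise identity $2n\rho^T_\varphi\wedge(d\eta_\varphi)^{n-1}=S^T_\varphi(d\eta_\varphi)^n$, the two pieces then assemble into the claimed inequality.

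For $\mathcal{M}_0$ I would consider the linear path $\chi_s^t:=u_0+s(u_t-u_0)$, $s\in[0,1]$, which stays in $\mathcal{H}$ because the positive-transverse condition is convex. Using \eqref{energy diff},
\begin{equation*}
\frac{\mathcal{M}_0(u_t)-\mathcal{M}_0(u_0)}{t}=\int_0^1\!\int_M\frac{u_t-u_0}{t}\Bigl[\bar{S}^T(d\eta_{\chi_s^t})^n-2n\rho^T_{d\eta}\wedge(d\eta_{\chi_s^t})^{n-1}\Bigr]\wedge\eta\,ds.
\end{equation*}
Lemma \ref{geodesic} provides uniform $C^{1,1}$ control on $u_t$ in $t$, so $(u_t-u_0)/t\to\dot{u}_0^+$ uniformly on $M$ and $(d\eta_{\chi_s^t})^k\to(d\eta_{u_0})^k$ weakly, uniformly in $s$; dominated convergence then gives
\begin{equation*}
\lim_{t\to 0^+}\frac{\mathcal{M}_0(u_t)-\mathcal{M}_0(u_0)}{t}=\int_M\dot{u}_0^+\Bigl[\bar{S}^T(d\eta_{u_0})^n-2n\rho^T_{d\eta}\wedge(d\eta_{u_0})^{n-1}\Bigr]\wedge\eta.
\end{equation*}

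For $H$, I would set $f_0:=\log\bigl((d\eta_{u_0})^n/(d\eta)^n\bigr)$, a smooth bounded basic function. Proposition \ref{mtentropy} with test function $f_0$ (after normalizing total masses by $V=\int(d\eta)^n\wedge\eta$) gives $H(\varphi)\geq\int_M f_0\,(d\eta_\varphi)^n\wedge\eta$ with equality at $\varphi=u_0$, since there $f_0$ is exactly the log of the density and attains the supremum. Consequently, with $\Omega_t:=\sum_{k=0}^{n-1}(d\eta_{u_t})^k\wedge(d\eta_{u_0})^{n-1-k}$,
\begin{equation*}
H(u_t)-H(u_0)\geq\int_M f_0\,d_Bd_B^c(u_t-u_0)\wedge\Omega_t\wedge\eta=\int_M(u_t-u_0)(\rho^T_{d\eta}-\rho^T_{u_0})\wedge\Omega_t\wedge\eta,
\end{equation*}
the second equality coming from integration by parts and the basic identity $d_Bd_B^c f_0=\rho^T_{d\eta}-\rho^T_{u_0}$. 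The same limiting arguments as before yield
\begin{equation*}
\liminf_{t\to 0^+}\frac{H(u_t)-H(u_0)}{t}\geq n\int_M\dot{u}_0^+(\rho^T_{d\eta}-\rho^T_{u_0})\wedge(d\eta_{u_0})^{n-1}\wedge\eta.
\end{equation*}

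Summing, the $\rho^T_{d\eta}$ contributions cancel and the trace identity converts $2n\rho^T_{u_0}\wedge(d\eta_{u_0})^{n-1}$ into $S^T_{u_0}(d\eta_{u_0})^n$, producing $\int_M\dot{u}_0^+(\bar{S}^T-S^T_{u_0})(d\eta_{u_0})^n\wedge\eta$ as the lower bound for the $\liminf$ of $(\mathcal{M}(u_t)-\mathcal{M}(u_0))/t$. Convexity of $\mathcal{M}$ along the weak geodesic (Theorem \ref{convex}) makes this difference quotient monotone in $t$, so the $\liminf$ is the honest limit, completing the proof. The main obstacle is justifying the integration by parts and the weak continuity of the mixed products $\Omega_t$ when the potentials have only $L^\infty$ transverse Hessian; this I would handle exactly as in the proof of Theorem \ref{weakconvexThm}, using the local Sasakian potentials and a partition of unity to reduce everything to standard Bedford-Taylor theory on $B_o(R/2)\subset\mathbb{C}^m$.
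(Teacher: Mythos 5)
Your proposal is correct and follows essentially the same route as the paper: the same decomposition of $\mathcal{M}$ from Proposition \ref{mt3.2}, exact first-order limits for the energy terms via \eqref{energy diff}, the one-sided entropy bound $H_{\mu_0}(\nu_t)-H_{\mu_0}(\nu_0)\geq\int_M\log\frac{\nu_0}{\mu_0}(\nu_t-\nu_0)$, integration by parts, and the identity $d_Bd_B^c\log\bigl((d\eta_{u_0})^n/(d\eta)^n\bigr)=\rho^T_{d\eta}-\rho^T_{d\eta_{u_0}}$. The only cosmetic difference is that you derive the entropy inequality from the Legendre-dual (sup) formula of Proposition \ref{mtentropy} evaluated at $f_0$, whereas the paper obtains the identical inequality from the convexity of $H_{\mu_0}$ along the affine segment of measures, both being parts of the same proposition.
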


\begin{proof}
  We first consider the entropy part $H_{\mu_0}$ of $\mathcal{M}$. According to the convexity of $H_{\mu_0}$ with respect to the affine structure of probability measures, we know that
  \begin{equation}
    H_{\mu_0}(\nu_1)-H_{\mu_0}(\nu_0)\geq \left. \frac{d H_{\mu_0}(\nu_s)}{ds} \right|_{s=0},
  \end{equation}
  where $\nu_s=s\nu_1 +(1-s) \nu_0$. The monotone convergence implies that $$\left. \frac{d H_{\mu_0} (\nu_s)}{ds} \right|_{s=0}= \int_M \log\frac{\nu_0}{\mu_0}(\nu_1- \nu_0).$$ In particular, $\nu_1=(d\eta_{u_t})^n\wedge \eta$ and $\nu_0=(d\eta_{u_0})^n\wedge \eta$, we have that
  \begin{equation}
  \begin{split}
    \mathrel{\phantom{=}}&\frac{1}{t}(H_{\mu_0}((d\eta_{u_t})^n\wedge \eta)-H_{\mu_0}((d\eta_{u_0})^n\wedge \eta)) \\
    \geq &\int_M \log\frac{(d\eta_{u_0})^n\wedge \eta}{\mu_0}\frac{1}{t} ((d\eta_{u_t})^n \wedge \eta- (d\eta_{u_0})^n\wedge \eta)\\
    =&\int_M \frac{u_t-u_0}{t}d_Bd_B^c(\log\frac{(d\eta_{u_0})^n\wedge \eta}{\mu_0}) \wedge ( \sum\limits_{j=0}^{n-1} (d\eta_{u_t})^{n-j-1} \wedge (d\eta_{u_0})^j) \wedge \eta.
  \end{split}
  \end{equation}
  We get the estimate of the entropy part by $t\to 0$, i.e.
  \begin{equation}
  \begin{split}
    \mathrel{\phantom{=}}&\lim\limits_{t\to 0^+}\frac{1}{t}(H_{\mu_0}((d\eta_{u_t})^n\wedge \eta)-H_{\mu_0}((d\eta_{u_0})^n\wedge \eta))\\
    \geq& n \int_M \left. \frac{du_t}{dt}\right|_{t=0^+}(\rho_{d\eta}^T - \rho_{d\eta_{u_0}}^T) \wedge (d\eta_{u_0})^n \wedge \eta.
  \end{split}
  \end{equation}

  From the equation \eqref{energy diff}, we know that
  \begin{equation}
    \lim\limits_{t\to 0^+}\frac{\bar{S}^T}{n+1}\frac{\mathcal{E}(u_t) - \mathcal{E}(u_0)}{t} =\bar{S}^T \int_M \left. \frac{du_t}{dt} \right|_{t=0^+}(d\eta_{u_0})^n\wedge\eta
  \end{equation}
  and
  \begin{equation}
    \lim\limits_{t\to 0^+}\frac{\mathcal{E}^{\rho_{d\eta}^T}(u_t) - \mathcal{E}^{\rho_{d\eta}^T}(u_0)}{t} =n\int_M\left. \frac{du_t}{dt} \right|_{t=0^+}(d\eta_{u_0})^{n-1}\wedge {\rho_{d\eta}^T} \wedge\eta.
  \end{equation}

  According to the Proposition \ref{mt3.2}, we get the required result.
\end{proof}

A direct consequence of Theorem \ref{convex} and the lemma above is the following corollary:

\begin{tl}
  If $u_0\in\mathcal{H}$ is a cscS metric, then $u_0$ is a minimum of $\mathcal{M}$ in $\mathcal{H}$.
\end{tl}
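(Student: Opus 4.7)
The plan is to argue by a direct chaining of the two preceding results: the convexity/continuity of $\mathcal{M}$ along weak geodesics (Theorem \ref{convex}) together with the one-sided derivative estimate at the endpoint (Lemma \ref{diff mabuchi increasing}).

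First, I would fix an arbitrary $u_1 \in \mathcal{H}$ and invoke Lemma \ref{geodesic} to obtain the unique weak $C^2$ geodesic $u_t$, $t \in [0,1]$, joining $u_0$ to $u_1$. By Theorem \ref{convex}, the function $f(t) := \mathcal{M}(u_t)$ is continuous on $[0,1]$ and convex on $(0,1)$. For a continuous convex function on $[0,1]$ with a well-defined right derivative at $0$, one has the elementary inequality
\begin{equation*}
f(1) - f(0) \;\geq\; \lim_{t\to 0^+} \frac{f(t)-f(0)}{t},
\end{equation*}
which is exactly the slope-monotonicity of convex functions applied between $0$ and $1$.

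Next, I would apply Lemma \ref{diff mabuchi increasing} to the pair $(u_0,u_1)$, which gives
\begin{equation*}
\lim_{t\to 0^+} \frac{\mathcal{M}(u_t) - \mathcal{M}(u_0)}{t} \;\geq\; \int_M \left.\frac{du_t}{dt}\right|_{t=0^+} (\bar{S}^T - S^T_{u_0})(d\eta_{u_0})^n \wedge \eta.
\end{equation*}
Since $u_0$ is a cscS metric, the transverse scalar curvature satisfies $S^T_{u_0} = \bar{S}^T$, so the integrand vanishes identically and the right-hand side is $0$. Combining this with the convexity inequality of the previous step yields $\mathcal{M}(u_1) \geq \mathcal{M}(u_0)$. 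As $u_1 \in \mathcal{H}$ was arbitrary, $u_0$ is a minimizer of $\mathcal{M}$ on $\mathcal{H}$.

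I do not expect a genuine obstacle here, since all analytic difficulty (existence of weak geodesics, convexity along them, and the one-sided derivative bound) has already been absorbed into Lemma \ref{geodesic}, Theorem \ref{convex}, and Lemma \ref{diff mabuchi increasing}. The only point that requires a brief justification is the passage from the slope estimate at $t=0^+$ to the global inequality $f(1) \geq f(0)$, which follows from the standard fact that a continuous convex function on $[0,1]$ lies above each of its supporting lines at interior-approach endpoints; this is where continuity on $[0,1]$ (as opposed to mere convexity on $(0,1)$) is used, and this continuity is precisely what Theorem \ref{convex} supplies.
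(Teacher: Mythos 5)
Your proposal is correct and is essentially the paper's own argument: both combine the convexity and continuity of $\mathcal{M}$ along the weak geodesic (Theorem \ref{convex}) with the one-sided derivative bound of Lemma \ref{diff mabuchi increasing}, which vanishes at a cscS metric. The only cosmetic difference is that the paper phrases the slope-monotonicity step as a proof by contradiction, while you state it directly.
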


\begin{proof}
  If the result is not true, we assume that there exists $u_1\in \mathcal{H}$ satisfies that $\mathcal{M} (u_1) < \mathcal{M}(u_0)$. Let $\{u_t\}_{t=0}^1$ is the corresponding weak geodesic connecting $u_0$ and $u_1$. According to the convexity of $\mathcal{M}$ with respect to $t$, we have that
  \begin{equation}
    \mathcal{M}(u_{t})\leq t\mathcal{M}(u_1)+ (1-t) \mathcal{M}(u_0),
  \end{equation}
  which is equivalent to
  \begin{equation}
    \frac{\mathcal{M}(u_{t})- \mathcal{M}(u_{0})}{t}\leq \mathcal{M}(u_{1})- \mathcal{M}(u_{0}).
  \end{equation}
  Hence $\lim\limits_{t\to 0^+}\frac{\mathcal{M}(u_{t})- \mathcal{M}(u_{0})}{t}\leq \mathcal{M}(u_{1})- \mathcal{M}(u_{0}).$ However, according to $u_0$ is a cscS metric and Lemma \ref{diff mabuchi increasing}, we know that
  \begin{equation}
    \lim\limits_{t\to 0^+}\frac{\mathcal{M}(u_{t})- \mathcal{M}(u_{0})}{t} \geq 0,
  \end{equation}
  which is a contradiction with $\mathcal{M} (u_1) < \mathcal{M}(u_0)$.
\end{proof}

\section{Uniqueness of cscS metrics}
\label{section4}
In this section, We  consider the uniqueness of cscS metrics in the space $\mathcal{H}$ modulo the action generated by the  Hamiltonian transverse holomorphic vector fields.

\medskip

\begin{dy}[\cite{boyer2008canonical}]
Fixed a transverse holomorphic structure $(\nu (\mathcal{F}_\xi), \bar{J} )$ on the characteristic foliation $\mathcal{F}_\xi$. A complex vector field $X$ on $M$ is called a  transverse holomorphic vector field  if it satisfies:
\begin{enumerate}
    \item $\pi( [\xi , X]) =0$;
    \item $\bar{J}(\pi(X))=\sqrt{-1}\pi (X)$;
    \item $\pi ([Y, X])-\sqrt{-1}\bar{J}\pi ([Y, X])=0$, $\forall Y$ satisfying $\bar{J}\pi(Y)=-\sqrt{-1}\pi (Y)$,
\end{enumerate}
where $\pi $ is the the projection to $\nu (\mathcal{F}_\xi)$.
\end{dy}

\medskip

Let $h^T(\xi,\bar{J})$ denote the set of all transverse holomorphic vector fields. One can easily check that $h^T(\xi,\bar{J})$ is a Lie algebra. Let $X$ be a transverse holomorphic vector fields and $f$ be a real-valued function, then $X+f\xi$ is also a  transverse holomorphic vector fields. So, $h^T(\xi,\bar{J})$ cannot have finite dimension. But, by \cite{boyer2008canonical}, we know that $h^T(\xi,\bar{J})/L_\xi$ has finite dimension.

%Before proving this we recall the canonical Sasakian metrics following the work of \cite{boyer2008canonical}, \cite{guan2012regularity}.

\medskip

\begin{dy}[\cite{CFO}]
\label{automorphism}
Let $(M, \xi, \eta , \Phi , g)$ be a compact Sasakian manifold. The automorphism group $G$ of the transverse holomorphic structure is the group of all biholomorphic automorphisms of $(C(M), J)$ which commute with the holomorphic flow generated by $\xi -\sqrt{-1} J \xi$ . Its identity component will be denoted by $G_{0}$.
\end{dy}

\medskip

It is well known (\cite{CFO}) that $G_{0}$ acts on the space of all Sasakian metrics on $M$ which is compatible with $g$.

\medskip

\begin{dy}[\cite{futaki2009transverse}]
A complex vector field $X$ on $(M, \xi, \eta , \Phi , g)$ is called a Hamiltonian transverse holomorphic vector field if it is transverse holomorphic and the complex valued basic function $\psi_{X}=\sqrt{-1} \eta (X)$ satisfies:
\begin{eqnarray}
\bar{\partial }_{B}\psi_{X} =-\frac{\sqrt{-1}}{2}d\eta (X , \cdot ).
\end{eqnarray}
\end{dy}

\medskip

Let $ \verb"g" ^{T}(\xi,\bar{J})$ denote the set of all Hamiltonian transverse holomorphic vector fields, it is easy to see that $\verb"g"^T(\xi,\bar{J})$ is a Lie algebra.

\medskip

\begin{mt}[\cite{CFO}]
Let $(M, \xi, \eta , \Phi , g)$ be a compact Sasakian manifold. Then the Lie algebra of the automorphism group $G$ of transverse holomorphic structure is the Lie algebra $\verb"g"^{T} (\xi,\bar{J})$ of all Hamiltonian transverse holomorphic vector fields.
\end{mt}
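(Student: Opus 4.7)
The plan is to verify the two inclusions $\mathrm{Lie}(G) \subseteq \verb"g"^T(\xi, \bar{J})$ and $\verb"g"^T(\xi, \bar{J}) \subseteq \mathrm{Lie}(G)$, exploiting the dictionary between objects on $M$ and objects on the K\"ahler cone $C(M) = M \times \mathbb{R}^+$. On $C(M)$ one has the Euler field $r\partial_r$ with $J(r\partial_r) = \xi$, so the holomorphic vector field generating the defining flow of $G$ is $Z = \xi - \sqrt{-1}J\xi = \xi + \sqrt{-1}\,r\partial_r$. Consequently $\mathrm{Lie}(G)$ coincides with the space of real holomorphic vector fields $\tilde X$ on $(C(M), J)$ satisfying $[\tilde X, r\partial_r] = 0$ and $[\tilde X, \xi] = 0$.

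\textbf{Forward inclusion.} Starting from such a $\tilde X$, the $r\partial_r$-invariance implies that the coefficients of $\tilde X$ in the splitting $TC(M) = \mathbb{R}\,r\partial_r \oplus \mathbb{R}\xi \oplus \pi^*\mathcal{D}$ are $r$-independent, so $\tilde X$ determines a smooth vector field $X$ on $M$ together with two basic functions, identifiable with $\mathrm{Re}\,\psi_X$ and $\mathrm{Im}\,\psi_X$ for $\psi_X = \sqrt{-1}\eta(X)$. Commutation with $\xi$ gives $\pi([\xi, X]) = 0$, while decomposing $L_{\tilde X}J = 0$ and projecting to $\nu(\mathcal{F}_\xi)$ supplies the remaining two defining properties of $h^T(\xi, \bar J)$. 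To extract the Hamiltonian identity I would contract $\tilde X$ with the cone K\"ahler form $\tilde\omega = \frac{1}{2}dd^c r^2$: holomorphicity of $\tilde X$ forces $d(\iota_{\tilde X}\tilde\omega)$ to be of type $(1,1)$, and restricting to $\{r=1\}$ via $\tilde\omega|_{r=1} = \frac{1}{2}d\eta$ converts this condition into exactly $\bar\partial_B \psi_X = -\frac{\sqrt{-1}}{2}d\eta(X, \cdot)$.

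\textbf{Reverse inclusion.} Given $X \in \verb"g"^T(\xi, \bar J)$ with Hamiltonian $\psi_X$, I would lift to $C(M)$ by the $r$-independent ansatz
\[
\tilde X := X - (\mathrm{Re}\,\psi_X)\,\xi + (\mathrm{Im}\,\psi_X)\,r\partial_r .
\]
The commutations $[\tilde X, r\partial_r] = 0 = [\tilde X, \xi]$ are automatic by construction. Expanding $L_{\tilde X}J = 0$ into components in the splitting above shows that it is \emph{equivalent} to the three transverse-holomorphicity conditions of $X$ together with the Hamiltonian equation on $\psi_X$, so $\tilde X$ is $J$-holomorphic on $C(M)$. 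Compactness of $M$ together with $[\tilde X, r\partial_r] = 0$ ensures that the flow of $\tilde X$ is complete on $C(M)$ and yields a one-parameter subgroup of $G$ with infinitesimal generator $\tilde X$.

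\textbf{Main obstacle.} The substantive content is the equivalence, in the reverse direction, between the transverse Hamiltonian identity on $M$ and $J$-holomorphicity of the lift on the cone; this is where the Sasakian-specific twisting by $d\eta$ must be carefully unpacked across the mixed radial/Reeb/contact components. The forward direction, as well as the completeness of the integrated flow, reduces to routine bookkeeping based on the explicit cone structure and compactness of $M$.
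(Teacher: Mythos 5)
This proposition is quoted by the paper from \cite{CFO} and is given no proof in the text, so there is no internal argument to compare against; the relevant benchmark is the standard cone argument of Cho--Futaki--Ono, and your plan follows exactly that route: identify $\mathrm{Lie}(G)$ with real holomorphic vector fields on $(C(M),J)$ commuting with $\xi$ and $r\partial_r$ (using $\xi-\sqrt{-1}J\xi=\xi+\sqrt{-1}\,r\partial_r$), then translate back and forth between such fields and pairs $(X,\psi_X)$ on $M$. The skeleton is right, and the forward direction and the completeness remark are indeed routine.

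As a proof, however, the proposal is not yet one, for two reasons. First, the step you yourself label the ``main obstacle'' --- that $L_{\tilde X}J=0$ for the lift is \emph{equivalent} to transverse holomorphicity of $X$ together with $\bar\partial_B\psi_X=-\tfrac{\sqrt{-1}}{2}d\eta(X,\cdot)$ --- is precisely the entire content of the proposition, and it is only announced, not carried out; without that computation (which must track the mixed $r\partial_r$/$\xi$/$\mathcal D$ components of $L_{\tilde X}J$ against $d\eta$) nothing has been established. Second, there is a real-versus-complex mismatch in the reverse inclusion: you set up $\mathrm{Lie}(G)$ as a space of \emph{real} holomorphic vector fields, but the Hamiltonian field $X$ of the paper's definition is a \emph{complex} vector field, so your ansatz $\tilde X=X-(\RE\psi_X)\,\xi+(\IM\psi_X)\,r\partial_r$ is complex and does not land in the space you defined. (It happens to be real in the test case $X=\xi$, where $\psi_\xi=\sqrt{-1}$ gives $\tilde X=\xi+r\partial_r$, but not in general.) You need to fix the dictionary between real holomorphic fields and their $(1,0)$-parts consistently on both sides, and, since the statement identifies \emph{Lie algebras}, also observe that the resulting bijection respects brackets. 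None of these is a wrong idea --- the approach is the correct and standard one --- but the decisive computation is missing.
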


\medskip

Furthermore, we can conclude that $\verb"g"^{T} (\xi,\bar{J})$ is of finite dimension since the dimension of $h^T(\xi,\bar{J})/L_\xi$ is finite. Indeed, if $X_1,X_2$ are two different Hamiltonian transverse holomorphic vector fields but $X_1=X_2+f\xi$ for some basic function $f$, then by the definition of Hamiltonian transverse holomorphic vector field, we have that
\begin{equation}
  \bar{\partial}_B(\sqrt{-1} \eta(X_2+f\xi))=-\frac{\sqrt{-1}}{2}d\eta(X_2+f\xi,\cdot),
\end{equation}
or equivalently,
\begin{equation}
  \bar{\partial}_B(\sqrt{-1} \eta(X_2)+\sqrt{-1}f)=-\frac{\sqrt{-1}}{2}d\eta(X_2,\cdot).
\end{equation}
Hence $\bar{\partial}_Bf=0$, i.e. $f\equiv \text{Constant}$, which implies that
\begin{equation}
  \verb"g"^{T} (\xi,\bar{J})\subset h^T(\xi,\bar{J})/L_\xi \oplus \mathbb{C}\xi,
\end{equation}
and $\verb"g"^{T} (\xi,\bar{J})$ is of finite dimension.

Let $\varphi $ be a complex valued  basic function, the Hamiltonian vector field $\partial_{d\eta}^\# \varphi$ of $\varphi$ corresponding to the transverse K\"ahler form $d\eta$ is defined by:
\begin{enumerate}
  \item $\bar{J}( \pi( \partial_{d\eta}^\#\varphi)) = \sqrt{-1} \pi( \partial_{d\eta}^\# \varphi)$,
  \item $\varphi=\sqrt{-1}\eta(\partial_{d\eta}^\#\varphi)$,
  \item $\bar{\partial}_B \varphi(\cdot) =-\frac{\sqrt{-1}}{2} d\eta (\partial_{d \eta}^\#\varphi , \cdot)$.
\end{enumerate}
As in \cite{boyer2008canonical}, we denote the Lichnerowicz operator $L_{d \eta}^B$ as follow:
\begin{equation}
  L_{d \eta}^B \varphi : = \frac{1}{4} (\triangle^2_B\varphi + 4(\rho ^T , \sqrt{-1} \partial_B \bar{\partial}_B \varphi) + 2 i_{\partial_{d \eta}^\# \varphi} \partial S^T ).
\end{equation}
According to \cite{boyer2008canonical,futaki2009transverse}, the kernel $\mathcal{H}_{d \eta}^B$ of $L_{d \eta}^B$ is just all the basic functions $\varphi\in \mathcal{H}$ such that $\partial_{d \eta}^\# \varphi$ is transverse holomorphic.

Now we prove the uniqueness of cscS metrics. It should be noted that if $\mathcal{H}^B_{d \eta}$ is trivial, i.e. there is no Hamiltonian transverse holomorphic vector fields over $M$, Guan and the second author proved that such metric is unique in \cite{guan2012regularity}. And we will use the method of perturbation in \cite{berman2014convexity} to prove the uniqueness of cscS metrics while $\mathcal{H}^B_{d \eta}$ is non-trivial.

Let $\mu>0$ be a basic smooth volume form on $M$ with the following normalization
\begin{equation}
  \int_M \mu=\int_M(d\eta)^n\wedge\eta.
\end{equation}
Similar to \cite{berman2014convexity}, we define the function
\begin{equation}
  \tilde{\mathcal{F}}_\mu(u)=\int_Mu \mu-\frac{\mathcal{E}(u)}{n+1} :=I_\mu(u)-\frac{\mathcal{E}(u)}{n+1}
\end{equation}
where $\mathcal{E}$ is the energy functional in section \ref{section3}. The differential of $\tilde{\mathcal{F}}_\mu$ at $u\in\mathcal{H}$ is
\begin{equation}
\label{differentialofF}
  \left.d\tilde{\mathcal{F}_\mu}\right|_{u} =\mu- (d\eta_u)^n\wedge \eta.
\end{equation}

For the functional $I_\mu$, we have the following inequality as in \cite{berman2014convexity}.

\begin{mt}
\label{geodist}
$I_\mu$ is strictly convex along weak $C^2$ geodesic $\{\varphi_t\}$, in the sense that if $f(t):=I_\mu(\varphi_t)$ is affine, then for any $t$, $d\eta_{\varphi_t}=d\eta_{\varphi_0}$. More precisely, if $d\eta_{\varphi_t}=d\eta+dd^c\varphi_t\leq Cd\eta$ and $\mu\geq A(d\eta)^n\wedge\eta$, then
\begin{equation}
\label{distance}
  f'(1)-f'(0)\geq \frac{\delta A}{C^{n+1}} d(d\eta_{\varphi_0},d\eta_{\varphi_1})^2,
\end{equation}
where $\delta>0$ only depends on $\mu$, $\eta$ and $M$, and $d(d\eta_{\varphi_0}, d\eta_{\varphi_1})$ is the distance between $d\eta_{\varphi_0}$ and $d\eta_{\varphi_1}$ defined in \cite{guan2012regularity}.
\end{mt}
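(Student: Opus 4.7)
The strategy is to establish the quantitative inequality (\ref{distance}), from which strict convexity follows at once: if $f$ is affine then $f'(1)-f'(0)=0$ forces $d(d\eta_{\varphi_0},d\eta_{\varphi_1})=0$, and applying the same estimate to every sub-interval $[s,t]$ shows $d\eta_{\varphi_t}$ is independent of $t$. So I would focus on proving (\ref{distance}).

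Formally, since $I_\mu$ is linear in $\varphi$ and (\ref{geodesic1}) holds,
\begin{equation*}
  f''(t)=\int_M\ddot\varphi_t\,\mu=\frac{1}{4}\int_M|d_B\dot\varphi_t|^2_{g_{\varphi_t}}\,\mu.
\end{equation*}
The hypothesis $d\eta_{\varphi_t}\leq C\,d\eta$ gives $|d_Bu|^2_{g_{\varphi_t}}\geq C^{-1}|d_Bu|^2_g$ and $(d\eta_{\varphi_t})^n\leq C^n(d\eta)^n$. Combined with $\mu\geq A(d\eta)^n\wedge\eta$ and the Poincar\'e inequality for basic functions on the compact Sasakian manifold $(M,\xi,\eta,\Phi,g)$ (valid because the transverse Laplacian has a positive first eigenvalue $c_P>0$ on the $L^2$ orthogonal complement of the constants), this yields
\begin{equation*}
  f''(t)\geq \frac{Ac_P}{4C^{n+1}}\inf_{c\in\mathbb{R}}\int_M(\dot\varphi_t-c)^2(d\eta_{\varphi_t})^n\wedge\eta.
\end{equation*}
The right-hand side is the squared speed at time $t$ of the projected curve $\tau\mapsto d\eta_{\varphi_\tau}$ in the quotient $\mathcal{K}\cong\mathcal{H}_0$. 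Integrating in $t\in[0,1]$ and applying Cauchy--Schwarz against the definition of $d(d\eta_{\varphi_0},d\eta_{\varphi_1})$ as the infimum of lengths of paths of metrics then yields (\ref{distance}) with $\delta=c_P/4$.

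The main obstacle is that $\varphi_t$ has only weak $C^2$ regularity, so $\ddot\varphi_t$ exists only distributionally and the geodesic equation (\ref{geodesic1}) holds only in the degenerate Monge--Amp\`ere sense of Proposition 2.4. I would therefore reinterpret $f$ as a function of $\tau\in D$, compute $d_\tau d_\tau^cf(\tau)=\int_M\tilde d\tilde d^c\Psi\wedge\mu$ as a positive current on $D$, and justify the chain of estimates above using the Bergman-kernel truncation from the proof of Theorem \ref{weakconvexThm}: locally write $d\eta=d_Bd_B^c\rho_j$, approximate $(d\eta_{\varphi_t})^n\wedge\eta$ by smooth Bergman measures, apply the plurisubharmonic-variation inequality of \cite{berndtsson2006subharmonicity}, and pass to the limit using the uniform bounds from Lemma \ref{geodesic} and dominated convergence, in the spirit of \cite{berman2014convexity}. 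The Poincar\'e step is standard once we note that $\dot\varphi_t$ is basic, so essentially all the technical difficulty lies in making the formal computation of $f''$ rigorous in the weak setting.
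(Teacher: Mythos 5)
Your formal computation is exactly the paper's: differentiate $I_\mu$ twice along the flow, use the geodesic equation to replace $\ddot\varphi_t$ by $\tfrac14|d_B\dot\varphi_t|^2_{g_{\varphi_t}}$, pass from $g_{\varphi_t}$ to $g$ via $d\eta_{\varphi_t}\le C\,d\eta$, apply a Poincar\'e inequality with respect to the fixed measure $\mu$ (this is where $\delta$ comes from), then trade $\mu\ge A(d\eta)^n\wedge\eta$ and $(d\eta)^n\ge C^{-n}(d\eta_{\varphi_t})^n$ to recognize the squared speed of the curve in $\mathcal{K}$, and finish with energy $\ge$ length${}^2\ge$ distance${}^2$. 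That chain of estimates, and the deduction of strict convexity from \eqref{distance}, is correct and is what the paper does.

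Where your plan goes wrong is the rigorization in the weak setting. The Bergman-kernel truncation of Theorem \ref{weakconvexThm} is tailored to the entropy term, whose density $\log\bigl((d\eta_{\varphi})^n\wedge\eta/(d\eta)^n\wedge\eta\bigr)$ is a nonlinear, unbounded function of the Monge--Amp\`ere measure of $\Psi$; for $I_\mu(\varphi)=\int_M\varphi\,\mu$ with $\mu$ a fixed smooth volume form there is no such measure to approximate, and the plurisubharmonic-variation inequality has nothing to act on. Moreover, computing $d_\tau d_\tau^c f=\int_M \tilde d\tilde d^c\Psi\wedge\mu=\int_M(\pi^*d\eta+\tilde d\tilde d^c\Psi)\wedge\mu\ge 0$ only yields \emph{weak subharmonicity} of $f$; it does not produce the quantitative lower bound $\ddot\varphi_t\ge\tfrac14|d_B\dot\varphi_t|^2_{g_{\varphi_t}}$ that \eqref{distance} requires, and positivity of a current cannot be upgraded to that pointwise inequality without further input. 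The paper's actual mechanism is different and simpler: it invokes the approximation result of \cite{guan2012regularity}, producing smooth $\varepsilon$-subgeodesics $\varphi_{t,\varepsilon}$ with $\ddot\varphi_{t,\varepsilon}-|\partial_B\dot\varphi_{t,\varepsilon}|^2_{d\eta_{\varphi_{t,\varepsilon}}}\ge 0$ (note: an inequality, since the approximants are not exact geodesics, but an inequality in the right direction) and with $C^1$-convergence to $\varphi_t$ preserving the constant $C$; one runs the whole estimate for each $\varepsilon$, integrates in $t$, and lets $\varepsilon\to0$ using only the continuity of $f'(t)=\int_M\dot\varphi_t\,\mu$. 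You should replace your Bergman-kernel step with this approximation argument (or, alternatively, argue pointwise a.e.\ using the $W^{2,p}$ regularity implied by $\sup|\triangle\Psi|\le C$ together with the positivity of $\pi^*d\eta+\tilde d\tilde d^c\Psi$), otherwise the quantitative inequality is not established.
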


\begin{proof}
  The $C^1$-regularity of $\varphi_t$ implies that $f'(t)=\int_{M}\dot{\varphi}_t\mu$ is continuous. In order to get the estimate \eqref{distance}, we will apply the result of \cite{guan2012regularity}, that is we can approximate $\{\varphi_t\}$, by a smooth sequence $\{\varphi_{t,\varepsilon}\}$, and $\ddot{\varphi}_{t,\varepsilon} - \left| \partial_B \dot{\varphi}_{t,\varepsilon} \right|^2_{d\eta_{\varphi_{t,\varepsilon}}}\geq 0$. Furthermore, we know that the constant $C$ is still valid, since $\varphi_{t,\varepsilon}$ and $\dot{\varphi}_{t,\varepsilon}$ converges to $\dot{\varphi}_t$.

  With the notation above, we can compute directly
  \begin{equation}
  \begin{split}
    \frac{d^2}{dt^2}I_{\mu}(\varphi_{t,\varepsilon})&=\int_{M} \ddot \varphi_{t,\varepsilon} \mu\\
    &\geq \int_M \left| \partial_B \dot{\varphi}_{t,\varepsilon} \right|^2_{d\eta_{\varphi_{t,\varepsilon}}} \mu\\
    &=C^{-1} \int_{M} |\partial_B \dot \varphi_{t,\varepsilon}|^2_{d\eta} \mu\\
    &\geq \frac{\delta}{C}\int_{M} \left| \dot \varphi_{t,\varepsilon} -a_{t,\varepsilon}  \right|^2 \mu,\\
  \end{split}
  \end{equation}
%  \begin{equation}
%  \begin{split}
%    \frac{d^2}{dt^2}f(\varphi_{t,\varepsilon})&=\int_{M} \ddot \varphi_{t,\varepsilon} \mu\\
%    &=\lim\limits_{\varepsilon\to 0}\int_{M} \ddot \varphi_t \frac{\mu}{(d \eta_{ \varphi_t})^n\wedge \eta +\varepsilon \mu} (d \eta_{ \varphi_t})^n\wedge \eta\\
%    &=\lim\limits_{\varepsilon\to 0}\int_{M} |d_B \dot \varphi_t|^2_{d\eta_{\varphi_t}} \frac{\mu}{(d \eta_{ \varphi_t})^n\wedge \eta +\varepsilon \mu} (d \eta_{ \varphi_t})^n\wedge \eta\\
%    &\geq C^{-1}\lim\limits_{\varepsilon\to 0}\int_{M} |d_B \dot \varphi_t|^2_{d\eta} \frac{\mu}{(d \eta_{ \varphi_t})^n\wedge \eta +\varepsilon \mu} (d \eta_{ \varphi_t})^n\wedge \eta\\
%    &=C^{-1} \int_{M} |d_B \dot \varphi_t|^2_{d\eta} \mu\\
%    &\geq \frac{\delta}{C}\int_{M} \left| \dot \varphi_t -a_t  \right|^2 \mu,\\
%  \end{split}
%  \end{equation}
  where $a_{t,\varepsilon}$ is the average of $\dot\varphi_{t,\varepsilon}$ under the measure $\mu$.

  Integrating from $0$ to $1$, we get that
  \begin{equation}
    \left. \frac{dI_{\mu}(\varphi_{t,\varepsilon})}{dt} \right|_{t=1}- \left. \frac{dI_{\mu}(\varphi_{t,\varepsilon})}{dt} \right|_{t=0} \geq \frac{\delta}{C}\int_0^1\int_M \left| \dot \varphi_{t,\varepsilon} -a_{t,\varepsilon}  \right|^2 \mu dt.
  \end{equation}
  The continuity of the differential of $I_{\mu}$ implies that
  \begin{equation}
  f'(1) - f'(0) \geq \frac{\delta}{C}\int_0^1\int_M \left| \dot \varphi_{t} -a_{t}  \right|^2 \mu dt,
  \end{equation}
  where $a_{t}$ is the average of $\dot\varphi_{t}$ under the measure $\mu$. Hence if $f$ is affine, we have $\dot \varphi_t=a_t$, i.e. $d\eta_{ \varphi_t} = d\eta_{\varphi_0}$.

  For the last statement, we argue as following
  \begin{equation}
    \begin{split}
    f'(1)-f'(0)&\geq \frac{\delta}{C} \int_{0}^1 \int_M \left| \dot \varphi_t -a_t \right|^2\mu  dt\\
    &\geq \frac{A\delta}{C} \int_{0}^1 \int_M \left| \dot \varphi_t -a_t \right|^2 (d\eta)^n\wedge \eta \wedge dt \\
    &\geq \frac{A\delta}{C^{n+1}} \int_{0}^1 \int_M \left| \dot \varphi_t -a_t \right|^2 (d\eta_{\varphi_t})^n\wedge \eta \wedge dt\\
    &\geq \frac{A\delta}{C^{n+1}} d(d\eta_{\varphi_0},d\eta_{\varphi_1})^2.\qedhere
    \end{split}
  \end{equation}
\end{proof}

By the result of \cite{kacimi1990operateurs}, we know that for any basic smooth volume form $\mu$ on $M$, there exists a basic function $u\in \mathcal{H}$ such that
\begin{equation}
  (d\eta+d_Bd^c_Bu)^n\wedge\eta=\mu.
\end{equation}
Hence we can get the following lemma for the functional $\tilde{\mathcal{F}}_\mu$:

\begin{yl}
\label{funcitonal}
  Let $\mu$ and $\nu$ be two smooth basic volume forms with total mass equal to $\int_M(d\eta)^n\wedge\eta$. Then for all $\varphi \in \mathcal{H}$:
  \begin{equation}
    | \tilde{\mathcal{F}}_\mu(\varphi) - \tilde{\mathcal{F}}_\nu(\varphi) | \leq C_{\mu,\nu}.
  \end{equation}
\end{yl}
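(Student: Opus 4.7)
The plan is to start from the identity
\begin{equation*}
\tilde{\mathcal{F}}_\mu(\varphi) - \tilde{\mathcal{F}}_\nu(\varphi) = \int_M \varphi(\mu - \nu),
\end{equation*}
which is immediate from the definition and which, since $\int_M \mu = \int_M \nu$, is already translation invariant in $\varphi$; in particular the difference descends to $\mathcal{K}$. Using the transverse Calabi--Yau theorem of \cite{kacimi1990operateurs}, I would produce smooth basic potentials $u_\mu, u_\nu \in \mathcal{H}$ solving $(d\eta_{u_\mu})^n\wedge\eta = \mu$ and $(d\eta_{u_\nu})^n\wedge\eta = \nu$; these will feed into the final bound only through $\|u_\mu - u_\nu\|_{L^\infty(M)}$.

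The key algebraic step is the telescoping identity
\begin{equation*}
\mu - \nu = \bigl[(d\eta_{u_\mu})^n - (d\eta_{u_\nu})^n\bigr]\wedge \eta = d_Bd_B^c(u_\mu - u_\nu) \wedge T \wedge \eta,
\end{equation*}
where $T := \sum_{k=0}^{n-1}(d\eta_{u_\mu})^{n-1-k}\wedge(d\eta_{u_\nu})^k$ is a closed semipositive basic $(n-1,n-1)$-form. Substituting and integrating by parts twice to shift $d_Bd_B^c$ from $u_\mu - u_\nu$ onto $\varphi$ should yield
\begin{equation*}
\int_M \varphi(\mu - \nu) = \int_M (u_\mu - u_\nu)\, d_Bd_B^c\varphi \wedge T \wedge \eta.
\end{equation*}
The integrations by parts go through because, in the Sasakian calculus, any basic form of degree exceeding $2n$ vanishes, so the Leibniz remainders of the form $(\text{basic}) \wedge d\eta \wedge \eta$ that would otherwise obstruct Stokes' theorem drop out identically.

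To conclude, I would rewrite $d_Bd_B^c\varphi = d\eta_\varphi - d\eta$, turning the signed measure on the right into the difference of two positive measures $d\eta_\varphi \wedge T \wedge \eta$ and $d\eta \wedge T \wedge \eta$. Because $[d\eta_\varphi]_B = [d\eta]_B$ in $H_B^{1,1}(M,\mathcal{F}_\xi)$ and every factor of $T$ is a closed basic $(1,1)$-form cohomologous to $d\eta$, both of these positive measures have total mass exactly $n\int_M(d\eta)^n\wedge\eta$ by repeating the same Stokes argument. Hence the total variation of the signed measure is at most $2n\int_M(d\eta)^n\wedge\eta$, and
\begin{equation*}
\bigl|\tilde{\mathcal{F}}_\mu(\varphi) - \tilde{\mathcal{F}}_\nu(\varphi)\bigr| \leq 2n\,\|u_\mu - u_\nu\|_{L^\infty(M)}\int_M(d\eta)^n\wedge\eta =: C_{\mu,\nu},
\end{equation*}
a constant independent of $\varphi$.

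The step that I expect to require the most care is the Sasakian Stokes argument: one has to track sign conventions through two integrations by parts and verify at each stage that the contact remainder of the form $(\text{basic}) \wedge d\eta$ vanishes by basic-degree reasons, so that no correction terms appear. Everything else is positivity and cohomological bookkeeping.
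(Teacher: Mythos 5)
Your proposal is correct and follows essentially the same route as the paper's proof: solve the transverse Calabi--Yau equation for potentials $u_\mu,u_\nu$, telescope $\mu-\nu$ into $d_Bd_B^c(u_\mu-u_\nu)\wedge T\wedge\eta$, integrate by parts onto $\varphi$, replace $d_Bd_B^c\varphi$ by $d\eta_\varphi-d\eta$, and bound using the fixed cohomological masses together with $\|u_\mu-u_\nu\|_{L^\infty}$ (the paper uses the oscillation). Your version is in fact slightly more careful than the paper's displayed computation, which contains a typo in the mixed-wedge factor and states the final bound less explicitly.
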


\begin{proof}
  Assume $\mu=(d\eta_{u_\mu})^n\wedge \eta$ and $\nu=(d\eta_{u_\nu})^n\wedge \eta$. Then
  \begin{align*}
  %\begin{split}
    &\mathrel{\phantom{=}}\left| \tilde{\mathcal{F}}_\mu(\varphi) -\tilde{\mathcal{F}}_\nu(\varphi) \right|\\
    &=\left| I_{\mu}(\varphi )- I_{\nu} (\varphi)\right|\\
    &= \left| \int_M \varphi d_B d_B^c(u_{\mu} -u_{\nu}) (\mathop{\sum}_{i=0}^{n-1}(d_Bd_B^c u_\mu)^i\wedge (d_Bd_B^c u_\mu)^{n-i-1})\wedge \eta \right|\\
    &= \left| \int_M (u_{\mu} -u_{\nu}) d_B d_B^c \varphi (\mathop{\sum}_{i=0}^{n-1}(d_Bd_B^c u_\mu)^i\wedge (d_Bd_B^c u_\mu)^{n-i-1})\wedge \eta\right|\\
    &= \left| \int_M (u_{\mu} -u_{\nu}) (d\eta_\varphi- d\eta)\wedge (\mathop{\sum}_{i=0}^{n-1}(d_Bd_B^c u_\mu)^i\wedge (d_Bd_B^c u_\mu)^{n-i-1})\wedge \eta\right|\\
    &\leq \mathop{OSC}(u_{\mu} -u_{\nu}) =C_{\mu,\nu}.\qedhere
  %\end{split}
  \end{align*}
\end{proof}

We denote $F(u)=d\mathcal{M}|_u$ to be the differential of $\mathcal{M}$ at $u$. Here, we regard $F(u)$(or $F$ if it is clear) as a $1$-form on $\mathcal{H}$, i.e. for $v$ in the tangent space of $\mathcal{H}$ at $u$, $F(u)$ acts on $v$ by
\begin{equation}
F(u)\cdot v=\int_Mv(\bar{S}^T-S^T_u)(d\eta)^n\wedge\eta.
\end{equation}
According to the computation of \cite{guan2012regularity}, the Hessian of $\mathcal{M}$ at $u\in\mathcal{H}$ is equal to
\begin{equation}
\begin{split}
  (Hess \mathcal{M})_{u}(\psi_0,\psi_1)&=(D d\mathcal{M})(\psi_0,\psi_1)= (D F)(\psi_0,\psi_1)\\
  & =\frac{1}{2} \int_M\psi_0\mathfrak{D}^*_{d\eta_u} \mathfrak{D}_{d\eta_u}\psi_1(d\eta_u)^n\wedge \eta\\
  &=\frac{1}{2} \int_M\pr{\mathfrak{D}_{d\eta_u}\psi_0 } {\mathfrak{D}_{d\eta_u}\psi_1}_{d\eta_u}(d\eta_u)^n\wedge \eta,
\end{split}
\end{equation}
where $D$ is the connection defined in \eqref{connectionWP} and $\mathfrak{D}_{d\eta_u}=\bar\partial_B\partial_{d\eta_u}^\#$.

For a basic smooth volume form $\nu$ on $M$, we define a functional $G_\nu$ on $\mathcal{H}$ by
\begin{equation}
  G_\nu\cdot w=\int_Mw \nu.
\end{equation}
and consider the solution $v$ to the equation
\begin{equation}
\label{Dif Equ}
  D_vF|_u=G_\nu.
\end{equation}
Similarly to \cite{berman2014convexity}, we know that \eqref{Dif Equ} is solvable if and only if $G_\nu\cdot w=0$ for all $w\in \mathcal{H}^B_{d\eta_u}$, i.e. all basic $w$ such that $\partial_{d\eta_u}^\#w$ is transverse holomorphic.

Let $u_0,u_1$ be two different\footnote{Saying $u_0$ and $u_1$ are different, we mean $u_0-u_1$ is not transverse constant, i.e. $d\eta_{u_0}$ and $d\eta_{u_1}$ are two different transverse K\"ahler metrics.} smooth basic functions with cscS metrics such that $\mathcal{I}(u_0)=\mathcal{I}(u_1)=0$. Modifying the argument of \cite{berman2014convexity} for K\"ahler case, we show that after a preliminary modification of $u_i(i=0,1)$ by applying an action generated by $\verb"g"^{T} (\xi, \bar{J})$, the equation
\begin{equation}
  D_{v_i}F|_{u_i}=-G_{\nu_i}
\end{equation}
is solvable, where $G_{\nu_i}$ is the differential of $\tilde{\mathcal{F}}_\mu$ in \eqref{differentialofF} at $u_i$\footnote{We will also use the notation $u_i$ for the function after action.}. For the convenience of the readers, we will give details of the proof here.

\begin{yl}
\label{HamiltonianGeodesic}
  If $V$ is a Hamiltonian transverse holomorphic vector field, then it determines a geodesic ray in $\mathcal{K}$ following the flow of $V$.
\end{yl}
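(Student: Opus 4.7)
The strategy is to adapt Donaldson's classical argument from the Kähler case: the real transverse gradient flow of a real Hamiltonian potential of a transverse holomorphic vector field produces a geodesic ray in the space of transverse Kähler potentials.

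Work with a Hamiltonian transverse holomorphic vector field $V$ for which $\psi_V = f$ is real (the general complex case reduces to this by splitting $V$ according to the real structure on $\mathfrak{g}^T(\xi, \bar J)$). The Hamiltonian identity $\iota_V d\eta = 2\sqrt{-1}\,\bar\partial_B f$ then identifies $V$ with the $(1,0)$-transverse gradient of $f$, and the real vector field $V + \bar V$ equals the real transverse gradient $\nabla_B f$. Its flow $\phi_t$ exists for all $t \geq 0$ on compact $M$, lies in $G_0$ because $V$ is transverse holomorphic, and by Lemma~2.2 determines a smooth family $\varphi_t \in \mathcal{H}$ with
\[
\phi_t^* \eta = \eta + d_B^c \varphi_t, \qquad \phi_t^* d\eta = d\eta + d_B d_B^c \varphi_t.
\]

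I would then identify $\dot\varphi_t$. Since $\psi_V$ is real we have $\iota_{V+\bar V}\eta = 2 \operatorname{Im}(\psi_V) = 0$, so Cartan's formula reduces $L_{V+\bar V}\eta$ to $\iota_{V+\bar V} d\eta = 2\sqrt{-1}(\bar\partial_B - \partial_B) f = 4 d_B^c f$, giving $\dot\varphi_0 = 4 f$. The semigroup law $\phi_{t+s} = \phi_s \circ \phi_t$ then yields $\dot\varphi_t = 4 f \circ \phi_t$ for all $t \geq 0$.

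Finally, I would verify the geodesic equation \eqref{geodesic1} pointwise. A second differentiation gives $\ddot\varphi_t = 4(\nabla_B f)(f) \circ \phi_t = 4 |d_B f|^2_g \circ \phi_t$, where the last equality follows from $V + \bar V = \nabla_B f$ together with the normalization of the transverse norm implicit in the paper's convention $d_B^c = \tfrac{\sqrt{-1}}{2}(\bar\partial_B - \partial_B)$. Since $\phi_t$ is a transverse biholomorphism, Lemma~2.2's uniqueness gives $g_{\varphi_t} = \phi_t^* g$, and therefore
\[
|d_B \dot\varphi_t|^2_{g_{\varphi_t}} = 16\,\bigl|\phi_t^*(d_B f)\bigr|^2_{\phi_t^* g} = 16\, |d_B f|^2_g \circ \phi_t,
\]
so $\tfrac{1}{4}|d_B \dot\varphi_t|^2_{g_{\varphi_t}} = 4 |d_B f|^2_g \circ \phi_t = \ddot\varphi_t$. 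Hence $\{\varphi_t\}_{t \geq 0}$ is a smooth geodesic ray in $\mathcal{K}$.

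The main obstacle is the identification $g_{\varphi_t} = \phi_t^* g$, which rests on the uniqueness statement in Lemma~2.2 and on the fact that $\phi_t$, being generated by a transverse holomorphic vector field, preserves the transverse holomorphic structure. A secondary subtlety (handled by the opening reduction) is the reality of $\psi_V$: for general complex $\psi_V = f + \sqrt{-1} g$ a cross term $\operatorname{Im}(g^{i\bar j}\partial_{\bar j} g\,\partial_i f)$ appears in $(V + \bar V)(f)$, reflecting the splitting of the complex Lie algebra into a gradient (geodesic) part and a Hamiltonian (symplectomorphism) part.
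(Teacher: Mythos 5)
Your proof is correct and follows essentially the same route as the paper: flow along the (real part of the) Hamiltonian vector field, identify $\dot\varphi_t$ with the Hamiltonian potential $h^V_{\varphi_t}$, and verify $\ddot\varphi_t=\tfrac14|d_B\dot\varphi_t|^2_{g_{\varphi_t}}$ pointwise, the paper merely phrasing the middle step via the transformation law $h^V_{\tilde u_t}=h^V_{\tilde u_0}+V(\tilde u_t-\tilde u_0)$ rather than by pulling everything back through the flow as you do. The only step you omit is the paper's final normalization $u_t=\hat u_t-Ct$ (using that $\int_M\dot{\hat u}_t(d\eta_{\hat u_t})^n\wedge\eta$ is constant in $t$), which is needed to place the ray in $\mathcal{H}_0\cong\mathcal{K}$ rather than merely in $\mathcal{H}$; since subtracting a function linear in $t$ preserves the geodesic equation, this is a harmless addition.
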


\begin{proof}
  We denote $d\eta_{\tilde{u}_t}=d \eta+d_B d_B^c u_t= \exp (tV)^* d \eta_{u_0}$, where $\tilde{u}_t \in \mathcal{H}$ is smooth with respect to $t$ and $\tilde{u}_0=u_0$. Since $V$ is a Hamiltonian transverse holomorphic vector field, we know that there exists $h_{\tilde{u}_t}^V \in \mathcal{H}$ such that $V=\partial^\#_{d\eta_{\tilde{u}_t}} h_{\tilde{u}_t}^V$ and $h_{\tilde{u}_t}^V=h_{\tilde{u}_0}^V+V(\tilde{u}_t-\tilde{u}_0)$. Hence we have that
  \begin{equation}
  \begin{split}
    L_Vd\eta_{\tilde{u}_t} &=\frac{d}{dt} \exp(tV)^* d\eta_{u_0}\\
                  &=\sqrt{-1}d_Bd_B^c\dot{\tilde{u}}_t \\
                  &=\sqrt{-1}d_Bd_B^c h_{\tilde{u}_t}^V,
  \end{split}
  \end{equation}
  which implies that $\dot{\tilde{u}}_t=h_{\tilde{u}_t}^V+f(t)$ for $f\in C^{\infty}(\mathbb{R})$. Let $\hat{u}_t=\tilde{u}_t - \int_0^t f dt$, then we can conclude that $\dot{\hat{u}}_t=h_{\tilde{u}_t}^V$ and $V=\partial^\#_{d\eta_{\tilde{u}_t}} \dot{\hat u}_t$. Furthermore,
  \begin{equation}
  \begin{split}
     &\frac{d}{dt}\int_M \dot{\hat u}_t (d\eta_{\hat u_t})^n \wedge \eta\\
    =&\frac{d}{dt}\int_Mh_{\tilde u_t}^V (d\eta_{\tilde u_t})^n \wedge \eta\\
    =&\int_M V(\dot{u}_t) (d\eta_{\tilde u_t})^n \wedge \eta +\int_M h_{\tilde u_t}^V \triangle_B \dot{\tilde u}_t (d\eta_{\tilde u_t})^n \wedge \eta \\
    =&\int_M V(\dot{\tilde u}_t) (d\eta_{\tilde u_t})^n \wedge \eta - \int_M \langle \partial^\#_{d\eta_{\tilde u_t}} h_{\tilde u_t}^V, \nabla_B \dot{u}_t \rangle_{d\eta_{u_t}} (d\eta_{\tilde u_t})^n \wedge \eta \\
    =&\int_M V(\dot{\tilde u}_t) (d\eta_{\tilde u_t})^n \wedge \eta - \int_M V(\dot{\tilde u}_t) (d\eta_{\tilde u_t})^n \wedge \eta \\
    =&0,
  \end{split}
  \end{equation}
  so we have that $\int_M \dot{\hat u}_t (d\eta_{\hat u_t})^n \wedge \eta=C$. If we denote $u_t=\hat u_t -C t$, then we can conclude that $\mathcal{I}(u_t)\equiv 0$, i.e. $u_t\in \mathcal{H}_0$, since $\mathcal{I}(C_t)=C_t$ where $C_t$ is some constant of $t$. So we have that
  \begin{equation}
    \begin{split}
      \ddot{u}_t=&\dot{h}_{\tilde u_t}^V= V(\dot{\tilde u}_t)= V(\dot{\hat u}_t)\\
      =&d\eta_{\tilde u_t} (\partial^\#_{d \eta_{\tilde u_t}} \dot{\hat u}_t, \nabla^B \dot{\hat u}_t) =\left|\left|\partial_B\dot{\hat u}_t\right|\right|_{d\eta_{\tilde u_t}} =\left|\left|\partial_B\dot{u}_t\right|\right|_{d\eta_{ u_t}}.
    \end{split}
  \end{equation}
  Hence $u_t$ is a geodesic, and so is $d\eta_{u_t}$, i.e. the ray determined by $V$.
\end{proof}

\begin{mt}
  Let $S$ be the submanifold of $\mathcal{H}_0$ consisting of all potentials of metrics $\iota^*d\eta_{u_i}(i=0,1)$, where $\iota$ ranges over the actions generated by $\verb"g"^{T} (\xi,\bar{J})$. Then $\tilde{\mathcal{F}}_\mu$ has a minimum and hence a critical point on $S$. This implies that $G_{\nu_i}$ annihilates all basic functions $w$ such that $\partial_{d\eta_{u_i}}^\#w$ is transverse holomorphic.
\end{mt}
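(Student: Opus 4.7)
The plan is to show that $\tilde{\mathcal{F}}_\mu$ attains a minimum on each orbit
$S_i := \{\iota^* d\eta_{u_i} : \iota \in G_0\} \subset S$ by a geodesic-convexity-plus-coercivity
argument, and then read the annihilation statement off as the first-order optimality
condition. By Lemma \ref{HamiltonianGeodesic}, every $V\in \verb"g"^{T}(\xi,\bar{J})$
produces a geodesic $t\mapsto u_t^V$ in $\mathcal{H}_0$ starting at $u_i$ with initial
velocity the Hamiltonian $h^V_{u_i}$. Since $G_{0}$ is connected with finite-dimensional
Lie algebra $\verb"g"^{T}(\xi,\bar{J})$, the orbit $S_i$ is a finite-dimensional
submanifold of $\mathcal{H}_0$; via a Cartan-type decomposition of $G_0$ (exactly as in
the K\"ahler case \cite{berman2014convexity}), one covers $S_i$ by geodesic rays
emanating from $u_i$ whose initial tangent vectors span $\mathcal{H}^B_{d\eta_{u_i}}$, the
kernel of the Lichnerowicz operator.

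Along any such geodesic the second-variation formula
\[
d_\tau d_\tau^c \mathcal{E}(\varphi_t) = \int_M (\pi^* d\eta + \tilde d \tilde d^c \Psi)^{n+1}\wedge \eta
\]
and the geodesic equation imply that $\mathcal{E}$ is affine, while Proposition \ref{geodist}
gives strict convexity of $I_\mu$ together with the quadratic lower bound \eqref{distance}.
Consequently $\tilde{\mathcal{F}}_\mu = I_\mu - \mathcal{E}/(n+1)$ is strictly convex and
coercive in Mabuchi distance along every non-trivial geodesic in $S_i$. Coercivity along
each geodesic ray, combined with finite-dimensionality of $T_{u_i}S_i$, forces any
minimizing sequence to stay in a bounded subset of the parameter space; extracting a
convergent subsequence and invoking the continuity estimate of Lemma \ref{funcitonal}
produces a minimizer $u_i^*\in S_i$.

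Relabelling $u_i := u_i^*$, vanishing of the first variation of $\tilde{\mathcal{F}}_\mu$
on $T_{u_i}S_i$, combined with \eqref{differentialofF}, gives
\[
0 = d\tilde{\mathcal{F}}_\mu|_{u_i}\cdot w = \int_M w\,(\mu - (d\eta_{u_i})^n\wedge \eta) = -G_{\nu_i}\cdot w
\]
for every $w \in \mathcal{H}^B_{d\eta_{u_i}}$, which is the desired annihilation. The main
obstacle is the coercivity step: because $G_0$ is generally non-compact, the orbit $S_i$
is unbounded, so one cannot rely on compactness of the orbit and must instead genuinely
use the quantitative Mabuchi-distance bound \eqref{distance} to prevent the minimizing
sequence from escaping to infinity. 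A secondary but more routine subtlety is the
real/complex bookkeeping needed to identify $T_{u_i}S_i$ with $\mathcal{H}^B_{d\eta_{u_i}}$
through the Hamiltonian correspondence, which is taken care of by a Cartan decomposition
of $\verb"g"^{T}(\xi,\bar{J})$ as in \cite{berman2014convexity}.
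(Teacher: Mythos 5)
Your overall strategy --- cover the finite-dimensional orbit $S$ by the geodesic rays of Lemma \ref{HamiltonianGeodesic}, exploit convexity of $\tilde{\mathcal{F}}_\mu$ along them to produce a minimizer, and then read off the annihilation of $G_{\nu_i}$ on $\mathcal{H}^B_{d\eta_{u_i}}$ as the first-order condition --- is the same as the paper's, and your treatment of the first-order condition (including the harmless additive constant in $h^V_{u_i}=\dot u_{i,t}|_{t=0}+C$, which is killed by the normalization $\int_M\mu=\int_M(d\eta)^n\wedge\eta$) is fine. The gap is in the coercivity step. You assert that the quantitative bound \eqref{distance} makes $\tilde{\mathcal{F}}_\mu$ coercive along \emph{every} nontrivial geodesic ray, but the constant in \eqref{distance} depends on a comparison $d\eta_{\varphi_t}\le C\,d\eta$ that must hold uniformly on the segment in question; along an infinite ray $t\mapsto \exp(tV)^*d\eta_{u_i}$ the pulled-back metrics can degenerate, so $C=C(T)$ may blow up as $T\to\infty$ and the estimate does not by itself force $f(t)\to+\infty$. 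Nor does bare strict convexity help: a strictly convex function on $[0,\infty)$ need not be proper unless one controls the sign of $f'(0)$.

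The paper closes exactly this hole with a two-step normalization that your write-up is missing. First take the reference measure $\mu_0=(d\eta_{u_i})^n\wedge\eta$: by \eqref{differentialofF} the point $u_i$ is then a critical point of $\tilde{\mathcal{F}}_{\mu_0}$, so along each ray $f'(0)=0$; ordinary convexity together with the qualitative part of Proposition \ref{geodist} (if $f$ is affine the ray is trivial) already yields $f(t)\to+\infty$, with no need for uniform constants. Second, Lemma \ref{funcitonal} says that for any other normalized $\mu$ the functional $\tilde{\mathcal{F}}_\mu$ differs from $\tilde{\mathcal{F}}_{\mu_0}$ by a quantity bounded uniformly over all of $\mathcal{H}$, so properness on $S$ transfers to arbitrary $\mu$. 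Note that this is the actual role of Lemma \ref{funcitonal}: it compares the functionals attached to two measures, and it is not a ``continuity estimate'' in the potential, so it cannot be used the way you invoke it when extracting the minimizer from a convergent subsequence (continuity of $\tilde{\mathcal{F}}_\mu$ on the finite-dimensional orbit is elementary and needs no lemma). Finally, the ``Cartan-type decomposition'' you appeal to is not needed: $S$ is by definition the union of the geodesic rays of Lemma \ref{HamiltonianGeodesic}, and finite dimensionality of the Lie algebra of Hamiltonian transverse holomorphic vector fields is all that is used.
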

\begin{proof}
   According to Lemma \ref{HamiltonianGeodesic}, any Hamiltonian transverse holomorphic vector field $V$ determines a geodesic ray starting at $u_i$. And $S$ is the union of all such rays. If $\mu=(d\eta_{u_i})^n\wedge\eta$, then $u_i$ is a critical point of $\tilde{ \mathcal{ F } }_\mu$. Since $\tilde{\mathcal{F}}_\mu$ is strictly convex along each ray, it follows that $\tilde{\mathcal{F}}_{\mu}$ is proper on each ray. And here we say a function $f(t)$ is proper if and only if \(\mathop{\lim}\limits_{t\to +\infty} f(t) =+\infty.\) Since the dimension of $\verb"g"^{T} (\xi,\bar{J})$ is finite, $\tilde{\mathcal{F}}_\mu$ is proper on $S$ in this case. Lemma \ref{funcitonal} implies that $\tilde{\mathcal{F}}_\mu$ is proper on $S$ for any choice of $\mu$. Hence it has a minimum on $S$. And for convenience, we still use the notation $u_i$ for the function which achieves the minimum of $\tilde{\mathcal{F}}_\mu$ on $S$.

   Let $\iota_t=\exp{(tV)}$ be the one-parameter group of transformations determined by a Hamiltonian transverse holomorphic vector field $V=\partial_{d\eta_{u_i}}^\#h_{u_i}^V$. For $u_{i,t}\in \mathcal{H}_0$ which is the potential of $d\eta_{u_{i,t}}= \iota_t^*d \eta_{u_i}$, we have
   \begin{equation}
     L_Vd\eta_{u_{i}}=\sqrt{-1}\partial_B\bar{\partial}_Bh_{u_i}^V,
   \end{equation}
   and
   \begin{equation}
     \frac{d(g_t^*d\eta_{u_i})}{dt} =\frac{d(d\eta+\sqrt{-1}\partial_B \bar{\partial}_B u_{i,t})}{dt}=\sqrt{-1}\partial_B\bar{\partial}_B\dot{u}_{i,t}.
   \end{equation}
   Hence $h_{u_i}^V=\dot{u}_{i,t}|_{t=0}+C$ according to the argument of Lemma \ref{HamiltonianGeodesic}. We have that
   \begin{align}
     d\tilde{\mathcal{F}}_\mu\cdot h_{u_i}^V=d\tilde{\mathcal{F}}_\mu\cdot \dot{u}_{i,t}|_{t=0}=0,
   \end{align}
   since $u_i$ is a minimum of $\tilde{\mathcal{F}}_\mu$.
\end{proof}

\begin{zj}
   After the modification, $u_i$ is still a critical point of $\mathcal{M}$, since $\mathcal{M}$ is invariant along the flow generated by Hamiltonian transverse holomorphic vector field.
\end{zj}

\medskip

Now, we give a proof of the main theorem.

\medskip

{\bf A proof of Theorem 1.1. } Let $g_{0}$ and $g_{1}$ are two cscS metrics compatible with $(M,\xi,\eta,\Phi,g)$, and $u_{0}$ and $u_{1}$ are the transverse K\"ahler potential  functions with respect to $g_{0}$ and $g_{1}$. By Proposition 4.5., modulo automorphisms, we can suppose that
 $u_i(i=0,1)$ satisfies that there exists $v_{i}$ such that
\begin{equation}
  D_{v_i}F|_{u_i}=-G_{\nu_i}.
\end{equation}
And now we   prove that $d\eta_{u_0}=d\eta_{u_1}$.

  Consider the functional $\mathcal{M}_s=\mathcal{M}+s\tilde{\mathcal{F}}_\mu$. Its differential is
  \begin{equation}
    F_s(u_i)=F(u_i)+sd\tilde{\mathcal{F}}_\mu(u_i)=F(u_i)+sG_{\nu_i}.
  \end{equation}
  For all $w_s$ considered as a tangent vector field in $\mathcal{H}$ along the curve $u_i+sv_i$ we have that
  \begin{equation}
  \label{00000}
    \left.\frac{d}{ds}\right|_{s=0} F_s(u_i+sv_i)\cdot w_s=D_{v_i}F|_{u_i}\cdot w_0 + F(u_i) D_{v_i}w_s +G_{\nu_i}\cdot w_0 =0.
  \end{equation}

  Since $F_s(u_i+sv_i)$ is a linear functional on the the space of basic functions, we have that $|F_s(u_i+sv_i)\cdot w|\leq C\mathop{\sup}\limits_M|w|o(s)$. Indeed, we can write
  \begin{equation}
    F_s(u_i+sv_i)\cdot w=\int_M w f(s,x) dV,
  \end{equation}
  for some function $f$ on $\mathbb{R}\times M$. Equation \eqref{00000} implies that $\left.\frac{\partial f(s,x)}{\partial s}\right|_{s=0}=0$. Furthermore $F_s(u_i+sv_i)\cdot w=0$ at $s=0$, arguing by the differential mean value theorem, we have that
  \begin{equation}
    \left| F_s(u_i+sv_i)\cdot \frac{w}{\mathop{\sup}\limits_M |w|}-0 \right| =|s|\left|\int_M \frac{w}{\mathop{\sup}\limits_M |w|} \left.\frac{\partial f(t,x)}{\partial t}\right|_{t=\theta(s,x)} dV\right|,
  \end{equation}
  where $|\theta(s,x)|<|s|$. Letting $s\to 0$, we conclude that $$|F_s(u_i+sv_i)\cdot w|\leq C\mathop{\sup}\limits_M|w|o(s).$$

  Lemma \ref{geodesic} implies that we can connect $u_0^s=u_0+sv_0$ and $u_1^s=u_1+sv_1$ by a unique weak $C^2$ geodesic $u_t^s$. In particular, according to the regularity in \cite{guan2012regularity}, we know that $||u_t^s||_{C^1(M\times [0,1])}$ and $|\triangle u_t^s|_{M\times [0,1]}$ is bounded for $s$ sufficiently small. According to Lemma \ref{diff mabuchi increasing}, the convexity of $\mathcal{M}$ implies that
  \begin{equation}
    \left.\frac{d}{dt}\right|_{t=0^+}\mathcal{M}(u_t^s)\geq F(u_0^s)\cdot \left. \frac{d u_t^s}{t} \right|_{t=0^+}
  \end{equation}
  and
  \begin{equation}
    \left.\frac{d}{dt}\right|_{t=1^-}\mathcal{M}(u_t^s)\leq F(u_1^s)\cdot \left. \frac{d u_t^s}{t} \right|_{t=1^-}.
  \end{equation}
  Since $\tilde{\mathcal{F}}_\mu$ is strictly convex along such weak geodesic, the same inequalities hold for $\mathcal{M}_s$ as well\footnote{$F$ on the righthand side of the above inequality will be replaced by $F_s$ as well.}. The linearity of $\mathcal{E}(u_t^s)$ in $t$ implies that
  \begin{equation}
    \begin{split}
      0&\leq s(\left.\frac{dI_\mu(u_t^s)}{dt}\right|_{t=1^-}-\left. \frac{dI_\mu(u_t^s)}{dt}\right|_{t=0^+})\\
      &\leq \left.\frac{d\mathcal{M}_s(u_t^s)}{dt}\right|_{t=1^-}- \left. \frac{d \mathcal{M}_s }{ dt} \right|_{ t=0^+}\\
      &\leq F_s(u_1^s)\cdot \left.\frac{du_t^s}{dt}\right|_{t=1^-}-F_s(u_0^s)\cdot \left.\frac{du_t^s}{dt}\right|_{t=0^+}\\
      &=o(s).
    \end{split}
  \end{equation}
  Hence,
  \begin{equation}
    \left.\frac{dI_\mu(u_t^s)}{dt}\right|_{t=1^-}-\left.\frac{dI_\mu(u_t^s)}{dt}\right|_{t=0^+} \leq o(1).
  \end{equation}
  According to Proposition \ref{geodist}, we know that $d(d\eta_{u^s_0},d\eta_{u^s_1})\leq o(1)$. Hence
  \begin{equation}
  d(d\eta_{u_0},d\eta_{u_1})=0,
  \end{equation}
  which implies that $d\eta_{u_0}=d\eta_{u_1}$. This complete the proof of main theorem.

\hfill $\Box$

%===================================================

%\bibliography{bib}

\begin{thebibliography}{10}

\bibitem{berman2014convexity}
R. J. Berman and B. Berndtsson:
\newblock {Convexity of the $K$-energy on the space of K{\"a}hler metrics and
  uniqueness of extremal metrics.}
\newblock {\em arXiv:1405.0401}.

\bibitem{berndtsson2006subharmonicity}
B. Berndtsson:
\newblock {Subharmonicity properties of the bergman kernel and some other
  functions associated to pseudoconvex domains.}
\newblock {\em Annales de l'Institut Fourier}, 56(6):1633--1662, (2006).

\bibitem{blocki2009gradient}
Z. B{\l}ocki:
\newblock {A gradient estimate in the Calabi--Yau theorem.}
\newblock {\em Mathematische Annalen}, 344(2):317--327, (2009).

\bibitem{blocki2012}
Z. B{\l}ocki:
\newblock {On geodesics in the space of K{\"a}hler metrics,  Proceedings of the {\em Conference in Geometry}, dedicated to Shing-Tung Yau (Warsaw, April 2009)}.
\newblock {\em Advances in Geometric Analysis}, ed. S. Janeczko, J. Li, D. Phong, Advanced Lectures in Mathematics 21, pp. 3-20, International Press, (2012).

\bibitem{boyer2006Sasakian}
C. P. Boyer and K. Galicki:
\newblock {\em Sasakian Geometry}.
\newblock Oxford Univ. Press, (2006).

\bibitem{boyer2005einstein}
C. P. Boyer, K. Galicki and J. Koll{\'a}r:
\newblock Einstein metrics on spheres.
\newblock {\em Annals of mathematics}, 162:557--580, (2005).

\bibitem{boyer2008canonical}
C. P. Boyer, K. Galicki and S.~R. Simanca:
\newblock Canonical Sasakian metrics.
\newblock {\em Communications in Mathematical Physics}, 279(3):705--733, (2008).

\bibitem{chen2000space}
X. X. Chen:
\newblock The space of K{\"a}hler metrics.
\newblock {\em Journal of Differential Geometry}, 56(2):189--234, (2000).

\bibitem{CFO}
K. Cho, A. Futaki and H. Ono:
\newblock Uniqueness and examples of compact toric Sasakian-Einstein metrics.
\newblock {\em Communications in Mathematical Physics}, 277(2):439-458, (2008).

\bibitem{futaki2009transverse}
A. Futaki, H. Ono, and G. F. Wang:
\newblock Transverse K{\"a}hler geometry of Sasaki manifolds and toric
  Sasaki-Einstein manifolds.
\newblock {\em Journal of Differential Geometry}, 83(3):585--636, (2009).

\bibitem{GMSW2}
J.P. Gauntlett, D. Martelli, J. Spark and W. Waldram:
\newblock {A new infinite class of Sasaki-Einstein manifolds},
\newblock {\em Advances in Theoretical and Mathematical Physics}, 8:987-1000, (2004).

\bibitem{godlinski2000locally}
M. Godlinski, W. Kopczynski, and P. Nurowski:
\newblock Locally Sasakian manifolds.
\newblock {\em Classical and Quantum Gravity}, 17:105--115, (2000).

\bibitem{GuanZhang}
P. F. Guan and X. Zhang:
\newblock {A Geodesic equation in the space of Sasakian metrics}. in: Geometry and Analysis, No. 1, in: Adv. Lectures Math., vol. 17, 2010, pp. 303--318.

\bibitem{guan2012regularity}
P. F. Guan and X. Zhang:
\newblock Regularity of the geodesic equation in the space of Sasakian metrics.
\newblock {\em Advances in Mathematics}, 230(1):321--371, (2012).

\bibitem{kacimi1990operateurs}
E. Kacimi-Alaoui:
\newblock Op{\'e}rateurs transversalement elliptiques sur un feuilletage
  riemannien et applications.
\newblock {\em Compositio Mathematica}, 73(1):57--106, (1990).

\bibitem{KW}
I. R. Klebanov and E. Witten:
\newblock {Superconformal field theory on threebranes at a Calabi-Yau singularity}.
\newblock {\em Nuclear Physics B}, 536:199-218, (1999).

\bibitem{Ma}
J. Maldacena:
\newblock {The large N limit of superconformal field theories and supergravity}.
\newblock {\em Advances in Theoretical and Mathematical Physics}, 2:231-252, (1998).

\bibitem{martelli2006toric}
D. Martelli and J. Sparks:
\newblock {Toric geometry, Sasaki--Einstein manifolds and a new infinite class
  of ADS/CFT duals}.
\newblock {\em Communications in Mathematical Physics}, 262(1):51--89, (2006).

\bibitem{martelli2008sasaki}
D. Martelli, J. Sparks, and S. T. Yau:
\newblock {Sasaki--Einstein manifolds and volume minimisation}.
\newblock {\em Communications in Mathematical Physics}, 280(3):611--673, (2008).

\bibitem{NS}
Y. Nitta and K. Sekiya:
\newblock {Uniqueness of Sasakian-Einstein metrics}.
\newblock {\em Tohoku Mathematical Journal}, 64(3):453-468, (2012).

\bibitem{19}
S. Sasaki:
\newblock {On differentiable manifolds with certain structures which are closely related to
almost-contact structure}.
\newblock{\em Tohoku Mathematical Journal}, 2:459-476, (1960).

\bibitem{Sparks2010Sasaki}
J. Sparks:
\newblock {Sasaki-Einstein manifolds}.
\newblock {\em arXiv:1004.2461.}

\bibitem{zhang2010some}
X. Zhang:
\newblock {Some invariants in Sasakian geometry}.
\newblock {\em International Mathematics Research Notices}, 2010.

\end{thebibliography}
%\bibliographystyle{plain}

%===================================================

\end{document}